\newtheorem{teo}{Theorem}
\newtheorem{pro}{Proposition}
\newtheorem{lem}{Lemma}
\newtheorem{cor}{Corollary}
\newtheorem*{rem}{Remark}
\newtheorem*{rems}{Remarks}
\title{General position of a projection and of its image under a free unitary Brownian motion }
\author[N. Demni]{Nizar Demni}
\address{IRMAR, Universit\'e de Rennes 1\\ Campus de
Beaulieu\\ 35042 Rennes cedex\\ France}
\email{nizar.demni@univ-rennes1.fr}
\author[T. Hmidi]{Taoufik Hmidi}
\address{IRMAR, Universit\'e de Rennes 1\\ Campus de
Beaulieu\\ 35042 Rennes cedex\\ France}
\email{thmidi@univ-rennes1.fr}
\begin{document}
\maketitle

\begin{abstract}
Given an orthogonal projection $P$ and a free unitary Brownian motion $Y = (Y_t)_{t \geq 0}$ in a $W^{\star}$-non commutative probability space such that $Y$ and $P$ are $\star$-free in Voiculescu's sense, the main result of this paper states  that $P$ and $Y_tPY_t^{\star}$ are in general position at any time $t$. To this end, we study the dynamics of the unitary operator $SY_tSY_t^{\star}$ where $S = 2P-1$. More precisely, we derive a partial differential equation for the Herglotz transform of its spectral distribution, say $\mu_t$. Then, we provide a flow on the interval $[-1,1]$ in such a way that the Herglotz transform of $\mu_t$ composed with this flow is governed by both the Herglotz transforms of the initial ($t=0$) and the stationary ($t = \infty)$ distributions. This fact allows to compute the weight that $\mu_t$ assigns to $z=1$ leading to the main result. As a by-product, the weight that the spectral distribution of the free Jacobi process assigns to $x=1$ follows after a normalization. In the last part of the paper, we use combinatorics of non crossing partitions in order to analyze the term corresponding to the exponential decay $e^{-nt}$ in the expansion of the $n$-th moment of $SY_tSY_t^{\star}$.  
\end{abstract}

\section{Reminder}
Let $(\mathscr{A}, \tau)$ be a $W^{\star}$-non commutative probability space with unit ${\bf 1}$ and adjoint operation $\star$: $\mathscr{A}$ is a von Neumann algebra endowed with a faithful tracial state $\tau$. In a recent paper, we studied the free Jacobi process (\cite{DHH}): this is a family of positive operators $J = (J_t)_{t \geq 0}$ valued in the compressed non commutative probability space 
\begin{equation*}
\left(P\mathscr{A}P, \tau_{P}:= \frac{1}{\tau(P)}\tau\right)
\end{equation*}
where $P \in \mathscr{A}$ is an orthogonal projection. Actually, the operator $J_t$ is defined by 
\begin{equation*}
J_t = PY_tQY^{\star}_tP
\end{equation*}
 where $Y = (Y_t)_{t \geq 0} \in \mathscr{A}$ is a free unitary Brownian motion, $Q \in \mathscr{A}$ is another orthogonal projection such that $\{P,Q\}$ and $\{Y, Y^{\star}\}$ are free families in Voiculescu's sense (see \cite{BenL}, \cite{Biane} for details). When $P=Q$ and $\tau(P) = 1/2$, the main result proved in \cite{DHH} asserts that the spectral distribution of $J_t$ in $(P\mathscr{A}P, \tau_{P})$ fits that of 
\begin{equation*}
\frac{Y_{2t} + Y_{2t}^{\star} + 2{\bf 1}}{4} 
\end{equation*}
in $(\mathscr{A}, \tau)$. Two proofs leading to this description were written in \cite{DHH}. One of them is based on the following expansion: let $S := 2P-1, \tau(S) = 0, b = Y_tSY_t^{\star}$ then 
\begin{align*}
\tau[(PY_tPY_t^{\star})^n] = \frac{1}{2^{2n+1}}\binom{2n}{n}  +  \frac{1}{2^{2n}}\sum_{k=1}^n \binom{2n}{n-k}\tau((SY_tSY_t^{\star})^k).
\end{align*}
The description of the spectral distribution of $J_t$ then follows from the fact that in this case, $SY_tSY_t^{\star}$ and $Y_{2t}$ share the same spectral distribution (\cite{DHH}, Lemma 1). For general ranks $\tau(P) = \theta \in [0,1]$, we already noticed in \cite{DHH} that 
\begin{align}\label{Limit}
\tau[(PY_tPY_t^{\star})^n] = \frac{1}{2^{2n+1}}\binom{2n}{n}  + \frac{2\theta-1}{2} + \frac{1}{2^{2n}}\sum_{k=1}^n \binom{2n}{n-k}\tau((SY_tSY_t^{\star})^k).
\end{align}
However, we inferred there that a description of the spectral distribution of $SY_tSY_t^{\star}$ for arbitrary ranks $\theta \in (0,1)$, say $\mu_t^{\theta}$, similar to that of $\mu_t^{1/2}$ seems to be quite difficult. In this paper, we shall be mostly interested in the weight that $\mu_t^{\theta}$ assigns to $z=1$ and this is two-fold. On the one hand, this weight is exactly the one that the spectral distribution of $J_t$ assigns to $x=1$. This follows from the fact we prove below that the limit as $n \rightarrow \infty$ of the RHS of \eqref{Limit} depends only on this weight, together with Lebesgue convergence Theorem applied to the LHS of \eqref{Limit} after normalizing by $1/\tau(P)$. On the other hand, a result due to J. von Neumann (\cite{vN}, Theorem 13.7. p.55) shows that for any orthogonal projections $P_1, P_2 \in \mathscr{A}$ with subspaces $H_1$ and $H_2$ respectively,  
\begin{equation*}
\lim_{n \rightarrow \infty} \tau((P_1P_2)^n) = \tau(P_1 \wedge P_2),
\end{equation*}
where $P_1 \wedge P_2$ denotes the orthogonal projection onto $H_1 \cap H_2$. Accordingly, the weight of $\mu_t^{\theta}$ at $z=1$ allows to determine whether or not the projections $P$ and $Y_tPY^{\star}$ are in general position at any time $t > 0$, that is\footnote{If two orthogonal projections are free then they satisfy \eqref{GP} (\cite{VBA}, Lemma 2.1) and are in general position according to \cite{CK}.}: 
\begin{equation}\label{GP}
\tau(P\wedge (Y_tPY_t^{\star})) \overset{?}{=} \textrm{max}(\tau(P) + \tau(Y_tPY_t^{\star}) - 1, 0) =   \textrm{max}(2\tau(P) - 1, 0).
\end{equation}
In the case of two projections $P,Q$ with equal rank $\tau(P)=\tau(Q) = 1/2$, it was recently proved in \cite{CK} that $P$ and $Y_tQY_t^{\star}$ are in general position at any time $t$: 
\begin{equation*}
\tau(P \wedge (Y_tQY_t^{\star})) = 0.
\end{equation*}
This result was then used in order to get an insight into the so-called unification conjecture for the projections $P,Q$. In the same spirit, we shall determine the weight of $\mu_t^{\theta}$ assigned to $z=1$ and prove that \eqref{GP} holds for any $\theta \in (0,1]$ and any time $t > 0$. To this end, we use free stochastic calculus to derive a recursive time-dependent relation for the moments 
\begin{equation*}
r_n^{\theta}(t) := \tau[(SY_tSY_t^{\star})^n] = \int_{\mathbb{T}}z^n \mu_t^{\theta}(dz),
\end{equation*}
$\mathbb{T}$ being the unit circle. The obtained relation is then transformed into a partial differential equation  (hereafter pde) for the Herglotz transform of $\mu_t$:
\begin{equation*}
H^{\theta}(t,z) := 1+2\sum_{n \geq 1}r_n^{\theta}(t)z^n, \quad |z| < 1.
\end{equation*}
Compared to $H^{1/2}$, the pde satisfied by $H^{\theta}$ for general $\theta$ comes with the perturbation 
\begin{equation*}
2[\tau(a)]^2\frac{z(1+z)}{(1-z)^3} = 2(2\theta -1)^2\frac{z(1+z)}{(1-z)^3} ,
\end{equation*}
while keeping the same initial data $H^{\theta}(0,\cdot) = H^{1/2}(0,\cdot)$. Using the method of characteristics, we shall find a flow $(t,z) \mapsto \psi^{\theta}(t,z)$ on $\mathbb{R}_+ \times [-1,1]$ such that 
\begin{align}\label{Dyn}
[H_{\infty}^{\theta}(\psi^{\theta}(t,z))]^2 - [H_{\infty}^{\theta}(z)]^2 = [H^{\theta}(t,\psi^{\theta}(t,z))]^2 - [H^{\theta}(0,z)]^2.
\end{align}   
Here $H_{\infty}^{\theta}$ is the Herglotz transform of the spectral distribution $\mu_{\infty}^{\theta}$ of $SUSU^{\star}$ where $U \in \mathscr{A}$ is Haar distributed (\cite{NS}). Equivalently, $\mu_{\infty}^{\theta}$ is the weak limit as $t \rightarrow \infty$ of $\mu_t^{\infty}$ and is a deformation of the Haar distribution $\mu_{\infty}^{1/2}$on $\mathbb{T}$ (since $SY_tSY_t^{\star}$ and $Y_{2t}$ are equally distributed when $\theta = 1/2$). Up to our best knowledge, no description of $\mu_{\infty}^{\theta}$ have showed up yet in literature and it is sufficient for our purposes to focus only on its weight $\mu_{\infty}^{\theta}$ at $z=1$ (which is the residue of $H_{\infty}^{\theta}$ at $z=1$). Nonetheless, we shall supply here a full description of $\mu_{\infty}^{\theta}$ relying on an explicit expression of $H_{\infty}^{\theta}$. However rather then using the analytic machinery for the multiplicative convolution of probability distributions on $\mathbb{T}$ (\cite{DNV}), we found more convenient to us to write down $H_{\infty}^{\theta}$ by taking the limit in \eqref{Limit} as $t \rightarrow \infty$ and by using the moment generating function of $PUPU^{\star}P$ in $(P\mathscr{A}P, \tau_P)$ (\cite{Demni1}). Standard analysis arguments show that $\mu_{\infty}^{\theta}$ admits an absolutely continuous part whose support consists of two symmetric (with respect to the real axis) arcs that join each other at $z = \pm 1$ if and only if $\tau(S) = 0$. As to its discrete part, it consists of the single point $z=1$ with weight $|\tau(S)|$ to $z=1$. Coming back to the flow, we shall prove that for any fixed $t > 0$ we can find a real number $z_t^{\theta} \in (0,1)$ such that $\psi^{\theta}(t,z_t^{\theta}) = 1$. Since the pole of $H^{\theta}(t,\cdot)$ at $z=1$ is exactly the weight of $\mu_t^{\theta}$ at $z=1$ and since this weight can be recovered using a radial limit along the real interval $(0,1]$, then it is given again by $|\tau(S)|$ according to \eqref{Dyn}. As a result, taking the limit as $n \rightarrow \infty$ in \eqref{Limit} yields the main result of the paper: 
\begin{teo}\label{THM}
For any $t > 0$ and any rank $\tau(P) = \theta \in (0,1)$, the orthogonal projections $P$ and $Y_tPY_t^{\star}$ are in general position.
\end{teo}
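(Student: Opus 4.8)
The plan is to reduce the general position identity \eqref{GP} to an asymptotic analysis of the moments $r_n^{\theta}(t)$ together with the value of the weight of $\mu_t^{\theta}$ at $z=1$. First I would invoke von Neumann's theorem (\cite{vN}) with the two projections $P_1 = P$ and $P_2 = Y_tPY_t^{\star}$, which have equal trace $\theta$, to write
\begin{equation*}
\tau\big(P \wedge (Y_tPY_t^{\star})\big) = \lim_{n \to \infty} \tau\big((PY_tPY_t^{\star})^n\big).
\end{equation*}
It then suffices to evaluate the limit as $n \to \infty$ of the right-hand side of \eqref{Limit}. The Catalan-type term $2^{-(2n+1)}\binom{2n}{n}$ decays like $n^{-1/2}$ and drops out, the constant $(2\theta-1)/2$ persists, and the whole problem concentrates on the binomial-weighted moment sum.

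To handle that sum I would first record that $Y_t$ and $Y_t^{\star}$ have the same $\star$-distribution and that $S$ is $\star$-free from $\{Y_t,Y_t^{\star}\}$, so the moments $r_k^{\theta}(t)$ are real and satisfy $r_{-k}^{\theta}(t) = r_k^{\theta}(t)$; equivalently $\mu_t^{\theta}$ is invariant under conjugation. Writing $r_k^{\theta}(t) = \int_{\mathbb{T}} z^k\, \mu_t^{\theta}(dz)$, symmetrizing the sum over $k$, and exchanging sum and integral, I would use the elementary identity
\begin{equation*}
\frac{1}{2^{2n}}\sum_{k=-n}^{n}\binom{2n}{n-k}e^{ik\phi} = \cos^{2n}(\phi/2)
\end{equation*}
to recast the symmetrized sum as $\int_{\mathbb{T}}\cos^{2n}(\phi/2)\,\mu_t^{\theta}(de^{i\phi})$. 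Since the integrand is bounded by one and converges pointwise to the indicator of $\phi=0$, the Lebesgue convergence theorem gives, after peeling off the $k=0$ term (which contributes another vanishing Catalan factor),
\begin{equation*}
\lim_{n\to\infty}\frac{1}{2^{2n}}\sum_{k=1}^{n}\binom{2n}{n-k}r_k^{\theta}(t) = \frac{1}{2}\,\mu_t^{\theta}(\{1\}).
\end{equation*}

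Combining the three contributions to \eqref{Limit} then yields $\tau(P \wedge (Y_tPY_t^{\star})) = (2\theta-1)/2 + \mu_t^{\theta}(\{1\})/2$, so the conclusion hinges on the value of the weight at $z=1$. Inserting $\mu_t^{\theta}(\{1\}) = |\tau(S)| = |2\theta-1|$, furnished by the flow analysis built around \eqref{Dyn}, and splitting into the two regimes, I obtain $(2\theta-1)/2 + (2\theta-1)/2 = 2\theta-1$ for $\theta > 1/2$ and $(2\theta-1)/2 + (1-2\theta)/2 = 0$ for $\theta < 1/2$; in both cases, and trivially at $\theta=1/2$, this equals $\textrm{max}(2\theta-1,0)$, which is precisely \eqref{GP}.

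The genuine obstacle is not any of the steps above, which are routine once the interchange of limit and integration and the sign bookkeeping in the two regimes are handled with care; rather, it is the input $\mu_t^{\theta}(\{1\}) = |\tau(S)|$. Producing this value is where the real work lies: it requires deriving the pde for the Herglotz transform $H^{\theta}$, constructing the flow $\psi^{\theta}$ realizing \eqref{Dyn}, and exhibiting for each $t>0$ a point $z_t^{\theta}\in(0,1)$ with $\psi^{\theta}(t,z_t^{\theta})=1$ so that the residue of $H^{\theta}(t,\cdot)$ at $z=1$ can be read off from that of $H_{\infty}^{\theta}$. I would therefore carry out the present asymptotic reduction last, treating the weight computation as the technical heart supplied by the preceding sections.
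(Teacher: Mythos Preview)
Your proposal is correct and follows essentially the same route as the paper: von Neumann's limit formula, the binomial identity collapsing the weighted sum of moments to $\int\cos^{2n}(\phi/2)\,d\mu_t$, dominated convergence to extract $\mu_t^{\theta}(\{1\})$, and finally the input $\mu_t^{\theta}(\{1\})=|2\theta-1|$ from the flow analysis of \eqref{Dyn}. The paper organizes these steps into two lemmas (the asymptotics of the right-hand side of \eqref{Limit} and the radial-limit computation $\lim_{z\to1^-}(1-z)H(t,z)=2|\kappa|$) and then assembles them exactly as you describe; your explicit case split $\theta\gtrless 1/2$ is just the unpacking of $\max(2\theta-1,0)$.
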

It is worth noting that this is a kind of inverse problem since we rely on the knowledge of $H_{\infty}^{\theta}$ in order to guess the behavior of $H^{\theta}(t,z)$. Conversely, our result shows that the weight $|\tau(S)|$ comes originally from the spectral distribution of $SY_tSY_t^{\star}$ and remain mysteriously unchanged up to infinity. This propagation phenomenon happens for $J_t$ as well and is a by-product of our main result: (see \cite{Demni}, p.130 for a description of its stationary distribution):
\begin{cor}\label{COR}
Both the spectral distributions of $PY_tPY_t^{\star}P$ and of $PUPU^{\star}P$ in $(P\mathscr{A}P, (1/\theta)\tau)$ assign the same weight at $x=1$: 
\begin{equation*}
\frac{1}{\theta}\max[(2\theta-1),0]. 
\end{equation*}
\end{cor}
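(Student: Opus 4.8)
The plan is to extract the mass at $x=1$ of the spectral distribution of $J_t=PY_tPY_t^{\star}P$ by letting $n\to\infty$ in the moment expansion \eqref{Limit}, as announced in the introduction. First I would note that by traciality and $P^2=P$ one has $\tau[(PY_tPY_t^{\star}P)^n]=\tau[(PY_tPY_t^{\star})^n]$, so that
\[
\tau_P[(J_t)^n]=\frac{1}{\theta}\,\tau[(PY_tPY_t^{\star})^n].
\]
Since $0\le J_t\le P$ in $(P\mathscr{A}P,\tau_P)$, its spectrum lies in $[0,1]$, and the spectral theorem together with dominated convergence identify the weight at $x=1$ with $\lim_{n\to\infty}\tau_P[(J_t)^n]$. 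Hence everything reduces to computing the limit of the right-hand side of \eqref{Limit} and dividing by $\theta$.

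Second, I would dispose of the three terms separately. The central binomial asymptotics give $2^{-2n-1}\binom{2n}{n}\to0$, while the middle term is the constant $(2\theta-1)/2$. The crux is the weighted sum $S_n:=2^{-2n}\sum_{k=1}^n\binom{2n}{n-k}r_k^{\theta}(t)$, which I would treat by recognizing the binomial kernel as an approximate identity. Writing $z=e^{i\phi}$ and using $\binom{2n}{n-k}=\binom{2n}{n+k}$, a direct computation yields
\[
\frac{1}{2^{2n}}\sum_{k=-n}^{n}\binom{2n}{n-k}e^{ik\phi}=\cos^{2n}(\phi/2).
\]
Because the moments $r_k^{\theta}(t)$ are real, $\mu_t^{\theta}$ is invariant under $z\mapsto\bar z$; integrating the above against $\mu_t^{\theta}$ and isolating the $k=0$ term then gives
\[
2S_n+\frac{1}{2^{2n}}\binom{2n}{n}=\int_{\mathbb{T}}\cos^{2n}(\phi/2)\,\mu_t^{\theta}(dz).
\]
As $n\to\infty$ the kernel $\cos^{2n}(\phi/2)$ decreases boundedly to the indicator of $\{\phi=0\}$ (in particular it vanishes at $z=-1$), so dominated convergence forces the right-hand side to converge to $\mu_t^{\theta}(\{z=1\})=|\tau(S)|=|2\theta-1|$ by the main result. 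Thus $S_n\to|2\theta-1|/2$.

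Collecting the three limits yields
\[
\lim_{n\to\infty}\tau[(PY_tPY_t^{\star})^n]=\frac{2\theta-1}{2}+\frac{|2\theta-1|}{2}=\max(2\theta-1,0),
\]
and after division by $\theta$ this is exactly the claimed weight for $J_t$. For the stationary operator $PUPU^{\star}P$ I would run the same argument after letting $t\to\infty$: since free unitary Brownian motion converges in distribution to the Haar unitary $U$, the expansion \eqref{Limit} persists with the $r_k^{\theta}(t)$ replaced by the moments of $\mu_{\infty}^{\theta}$, and since the weight of $\mu_{\infty}^{\theta}$ at $z=1$ is again $|\tau(S)|$, the identical computation returns the same weight $\frac{1}{\theta}\max(2\theta-1,0)$.

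I expect the only real obstacle to be the evaluation of $\lim_n S_n$. The whole point is to recognize the one-sided binomial sum as the Fej\'er-type kernel $\cos^{2n}(\phi/2)$ and to exploit the conjugation-symmetry of $\mu_t^{\theta}$; once this is in place, the concentration of the kernel at $z=1$ (and the fact that it annihilates any mass at $z=-1$) collapses the entire limit onto the single atom supplied by Theorem \ref{THM}, and the remaining bookkeeping is routine.
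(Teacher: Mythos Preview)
Your proposal is correct and follows essentially the same route as the paper. The paper isolates exactly the same ingredients: the binomial identity producing $\cos^{2n}(\phi/2)$, the conjugation-invariance of $\mu_t$ (argued there via the trace property $\tau((SY_tSY_t^{\star})^k)=\tau((Y_t^{\star}SY_tS)^k)$ rather than via reality of the moments, but equivalently), dominated convergence of the kernel to the indicator of $\{z=1\}$, and then division by $\theta$ together with $\mu_t(\{1\})=|2\theta-1|$ from Theorem~\ref{THM} (and $\mu_{\infty}(\{1\})=|2\theta-1|$ from Corollary~\ref{stationar1} for the stationary case).
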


The paper is organized as follows. We first supply a full description of $\mu_{\infty}^{\theta}$ and derive a closed formula for its moments through Jacobi polynomials. The time-dependent recursive relation for the moments $r_n^{\theta}(t), n \geq 1$ of $\mu_t^{\theta}$ comes next and is an instance of a general formula derived in \cite{BenL}. The obtained relation is then transformed it into a pde satisfied by $H^{\theta}$ whose dynamics are analyzed using the method of characteristics, leading to the flow $\psi^{\theta}$. Once we do, we prove that the limit as $n \rightarrow \infty$ of the RHS of \eqref{Limit} depends only on the weight of $\mu_t^{\theta}$ at $z=1$ and the existence of $x_t^{\theta}$ at any time $t > 0$. Doing so allows to compute this weight, proving thereby our main result and its corollary. 
We close the paper by giving a special interest in the term corresponding to the exponential decay $e^{-nt}$ in $r_n^{\theta}(t)$. When $\theta =1/2$, it is given by a Laguerre polynomial (\cite{Biane}, \cite{DHH}). For general $\theta \in (0,1)$, we shall see that the difference occurs in the value this term takes at $t=0$. Using combinatorics of non crossing partitions, we show that this value is the $n$-th even moment of the $1/2$-fold convolution of the self-adjoint operator $a_1-a_2$, where $a_1, a_2 \in \mathscr{A}$ are two free copies of $S$. Of course, the resulting convolution is not necessarily a probability measure for general $\theta \in (0,1]$ while it is obviously so when $\theta =1/2$ (it reduces the spectral distribution of $a$ since $a_1$ and $-a_2$ are equally distributed). Using the $R$-transform machinery (\cite{DNV}), we can see that the Cauchy-Stieltjes transform of the $1/2$-convolution of $a_1-a_2$ is a root of a third degree polynomial that one can express using Gauss hypergeometric functions. \\
Henceforth, we shall omit the dependence of our notations on $\theta$ for sake of clarity.      

\section{The Stationary distribution $\mu_{\infty}$} 
This section is devoted to  the Lebesgue decomposition of the  spectral {distribution $\mu_\infty$} of $SUSU^\star$, where we recall that $U \in \mathscr{A}$ is Haar unitary operator. More precisely, we show that $\mu_{\infty}$ splits into an absolutely continuous part and a singular discrete one supported in $\{1\}$ with weight $|\tau(S)|$. To proceed, we shall write down its Herglotz transform $H_{\infty}$: 
\begin{equation*}
H_{\infty}(z)= \int_{\mathbb{T}}\frac{w+z}{w-z}\mu_{\infty}(dz)  = 1 + 2\sum_{n\geq 1}r_n z^n 
\end{equation*}
where we set  
\begin{equation*}
r_n := r_n(\infty) = \tau((SUSU^{\star})^n), \quad n \geq 1.
\end{equation*}
This may be done using the free multiplicative convolution of the unitary operators $S$ and $USU^{\star}$ (\cite{NS}) whose common spectral distribution is given by 
\begin{equation*}
\theta\delta_1 + (1-\theta) \delta_{-1}.
\end{equation*}
However we found more convenient to us to deduce $H_{\infty}$ from \eqref{Limit} and from the knowledge of the moment generating function of $PUPU^{\star}P$ in $P\mathscr{A}P$ (\cite{Demni1}). The issue of our computations is  
\begin{lem}
Set $\kappa \triangleq 2\theta-1 = \tau(S)$, then  
\begin{align*}
H_{\infty}(z) = \sqrt{1+ 4\kappa^2 \frac{z}{(1-z)^2}}
\end{align*}
in some neighborhood of the origin. The equality extends analytically to the open unit disc.  
\end{lem}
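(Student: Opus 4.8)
The plan is to identify $SUSU^\star$ as a product of two \emph{free} symmetries and to compute its law as a free multiplicative convolution. Since $U$ is Haar and $\star$-free from $S$, the conjugate $USU^\star$ is free from $S$ with the same distribution; as $S=2P-1$ is a symmetry of trace $\kappa$, both are distributed according to $\nu:=\theta\delta_1+(1-\theta)\delta_{-1}$, whence $\mu_\infty=\nu\boxtimes\nu$. I would compute this through the $\Sigma$-transform (\cite{NS}), which multiplies under $\boxtimes$. The moments of $\nu$ are $m_n=\theta+(1-\theta)(-1)^n$, so a geometric summation gives $\psi_\nu(z)=\sum_{n\geq1}m_nz^n=z(z+\kappa)/[(1-z)(1+z)]$ and hence the $\eta$-transform $\eta_\nu=\psi_\nu/(1+\psi_\nu)=z(z+\kappa)/(1+\kappa z)$.

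First I would invert $\eta_\nu$: the equation $\eta_\nu(c)=\eta$ is the quadratic $c^2+\kappa(1-\eta)c-\eta=0$, whose root vanishing at the origin is $\chi_\nu(\eta):=\eta_\nu^{-1}(\eta)$. Multiplicativity of $\Sigma_\nu(w)=\chi_\nu(w)/w$ gives $\chi_{\mu_\infty}(\eta)=\chi_\nu(\eta)^2/\eta$, so the $\eta$-transform $\eta:=\eta_{\mu_\infty}(z)$ of the target is determined implicitly by $z=\chi_\nu(\eta)^2/\eta$. The point is to avoid solving any further quadratic explicitly: writing $c:=\chi_\nu(\eta)$, I have \emph{simultaneously} $c^2=z\eta$ and $c^2+\kappa(1-\eta)c-\eta=0$. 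Eliminating $c$ between these two relations yields first $c=\eta(1-z)/[\kappa(1-\eta)]$, and then, after squaring and cancelling one factor of $\eta$, the clean identity $\eta/(1-\eta)^2=\kappa^2 z/(1-z)^2$.

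To conclude I would use that the Herglotz transform is $H_\infty=(1+\eta)/(1-\eta)$, so that $H_\infty^2-1=4\eta/(1-\eta)^2=4\kappa^2z/(1-z)^2$; selecting the branch of the square root equal to $1$ at $z=0$ gives precisely $H_\infty(z)=\sqrt{1+4\kappa^2z/(1-z)^2}$ in a neighbourhood of the origin. Since $H_\infty$ is analytic on the whole disc (being a Herglotz transform) and the radicand is a rational function with no zero near $0$, this identity then propagates to the entire open unit disc by analytic continuation.

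The two delicate points are the degenerate case $\kappa=0$ and the branch bookkeeping, and I expect the latter to be the only real obstacle. When $\kappa=0$ (that is $\theta=1/2$) the first moment of $\nu$ vanishes and the $\Sigma$-transform is not directly available; but there $\mu_\infty$ is the Haar measure, every $r_n$ is zero and $H_\infty\equiv1$, in agreement with the formula, so this case is dispatched separately. Elsewhere one must consistently choose the square-root branches making $\chi_\nu$ and $\eta_{\mu_\infty}$ vanish and $H_\infty$ equal $1$ at the origin. As an alternative, which is the route actually taken in the paper, one can avoid $\boxtimes$ entirely: letting $t\to\infty$ in \eqref{Limit} and matching the left-hand side with the moment generating function of $PUPU^\star P$ computed in \cite{Demni1}, then performing the Lagrange-type substitution $s/4=w/(1+w)^2$ that linearizes the kernel $\binom{2n}{n-k}$ into $\sum_{n\geq k}\binom{2n}{n-k}(s/4)^n=\tfrac{1+w}{1-w}w^k$, and finally solving for $\sum_{k\geq1}r_kw^k=(H_\infty(w)-1)/2$.
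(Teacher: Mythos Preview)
Your argument is correct and complete, but it takes a genuinely different route from the paper's. The paper avoids the $\boxtimes$ machinery entirely: it lets $t\to\infty$ in \eqref{Limit}, plugs in the closed form of $\sum_{n\geq1}m_nz^n$ for the stationary free Jacobi process from \cite{Demni1}, and then uses the identity $\sum_{n\geq0}\binom{2n+2k}{n}(z/4)^n=(1-z)^{-1/2}[\alpha(z)/z]^k$ with $\alpha(z)=z/(1+\sqrt{1-z})^2$ to collapse the double sum into $2\sum_{k\geq1}r_k[\alpha(z)]^k$. Inverting $\alpha$ via $\alpha^{-1}(w)=4w/(1+w)^2$ then yields $H_\infty(w)=\sqrt{1-4\theta(1-\theta)\alpha^{-1}(w)}/\sqrt{1-\alpha^{-1}(w)}$, which simplifies to the stated formula. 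Your approach, by contrast, goes straight for $\mu_\infty=\nu\boxtimes\nu$ via the $\Sigma$-transform and exploits the pleasant algebraic accident that the elimination of $c=\chi_\nu(\eta)$ from $c^2=z\eta$ and $c^2+\kappa(1-\eta)c-\eta=0$ gives the clean relation $\eta/(1-\eta)^2=\kappa^2 z/(1-z)^2$, from which $H_\infty^2=1+4\kappa^2 z/(1-z)^2$ drops out immediately. Your route is more self-contained and conceptually transparent about \emph{why} the answer depends only on $\kappa^2$; the paper's route has the advantage of never needing the $\Sigma$-transform (so no separate $\kappa=0$ case and no branch bookkeeping) and of recycling objects---the expansion \eqref{Limit} and the stationary Jacobi generating function---that are already load-bearing elsewhere in the paper. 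You correctly flagged both the $\kappa=0$ degeneracy and the branch choices as the only subtle points, and your final paragraph accurately reconstructs the paper's own method as an alternative.
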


\begin{proof}
Define
\begin{equation*}
m_n := \frac{1}{\tau(P)} \tau[(PUPU^{\star})^n], \, n \geq 1, \quad m_0 = 1. 
\end{equation*}
These are the moments of the stationary free Jacobi process associated with the parameters $\lambda = 1, \theta \in (0,1]$ (see \cite{Demni1} for notations, see also \cite{Collins}). For instance, equation (1) p.108 in \cite{Demni1} leads on the one side to   
\begin{align}\label{Prem}
\sum_{n\geq 1}m_nz^n = \frac{(2\theta-1) + \sqrt{1-4\theta(1-\theta)z}}{2\theta(1-z)} - 1, \, \, |z| < 1.
\end{align}
On the other side, we get after summing \eqref{Limit} over $n \geq 1$
\begin{eqnarray}\label{Deux}
\sum_{n\geq 1}m_nz^n &=& \frac{1}{2\theta}\left[\frac{1}{\sqrt{1-z}} -1 + \frac{(2\theta-1)z}{1-z}\right] +\frac{1}{\theta}\sum_{n\geq1}\frac{z^n}{2^{2n}}\sum_{k=1}^n \binom{2n}{n-k}r_k \nonumber \\
&=& \frac{1}{2\theta}\left[\frac{1}{\sqrt{1-z}} -1 + \frac{(2\theta-1)z}{1-z}\right] +\frac{1}{\theta} \sum_{k\geq1}r_k\frac{z^k}{2^{2k}}\sum_{n\geq0}^n \binom{2n+2k}{n}\frac{z^n}{2^{2n}}.
\end{eqnarray}
Using the following identity whose proof is written in \cite{DHH}:
$$
\sum_{n\geq0}^n \binom{2n+2k}{n}\frac{z^n}{2^{2n}}=\frac{2^{2k}}{\sqrt{1-z}}\left(1+\sqrt{1-z}\right)^{-2k}, \quad |z|<1, 
$$
and comparing \eqref{Prem} and \eqref{Deux}, we get
%$$
%\sum_{n\geq 1}m_nz^n =\frac{1}{2\theta}\left[\frac{1}{\sqrt{1-z}} -1 + \frac{(2\theta-1)z}{1-z}\right] +\frac{1}{\theta\sqrt{1-z}}\sum_{n\geq1} r_n\left(\frac{z}{(1+\sqrt{1-z})^2}\right)^n
%$$
\begin{equation*}
2 \sum_{n \geq 1}r_n[\alpha(z)]^{n} = \frac{\sqrt{1-4\theta(1-\theta)z}}{\sqrt{1-z}}  - 1,\quad\hbox{with}\quad \alpha(z)=\frac{z}{(1+\sqrt{1-z})^2}\cdot
\end{equation*}
Finally, recall from \cite{DHH} that $\alpha$ is invertible from the open unit disc onto some neighborhood of the origin, where the inverse is given by 
\begin{equation*}
\alpha^{-1}(z) = \frac{4z}{(1+z)^2}.
\end{equation*}
As a result
\begin{align*}
H_{\infty}(z) = 1+ 2\sum_{n \geq 1}r_nz^n &= \frac{\sqrt{1-4\theta(1-\theta)\alpha^{-1}(z)}}{\sqrt{1-\alpha^{-1}(z)}}. 
\\& =  \sqrt{\frac{1+z^2 +2z(1-8\theta(1-\theta))}{(1-z)^2}}
\\& = \sqrt{1+ 4\kappa^2 \frac{z}{(1-z)^2}}
\end{align*}
as desired. But since  
\begin{equation*}
z \mapsto 1+ 4\kappa^2 \frac{z}{(1-z)^2} 
\end{equation*}
does not take negative values when $z$ belongs to the open unit disc, then the last statement of the lemma is clear.    
\end{proof}

\begin{cor}\label{stationar1}
The Lebesgue decomposition of the  spectral measure $\mu_\infty$ of $SUSU^\star$ is given by
$$
\mu_\infty=|\kappa|\delta_{1}+\sqrt{1-\frac{\kappa^2}{\sin^2\psi}}{\bf 1}_{\{|\sin\psi| \geq |\kappa|\}}\, d\psi.
$$
\end{cor}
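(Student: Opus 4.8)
The plan is to read the Lebesgue decomposition of $\mu_\infty$ straight off the closed form $H_\infty(z)=\sqrt{1+4\kappa^2 z/(1-z)^2}$ supplied by the previous Lemma, using the classical inversion for Herglotz (Poisson) representations. Recall that $\mathrm{Re}\,\frac{w+z}{w-z}$ is the Poisson kernel, so the absolutely continuous part of $\mu_\infty$ has density (with respect to normalized arc length) equal to the radial boundary limit of $\mathrm{Re}\,H_\infty$, while an atom at a point $\zeta\in\mathbb{T}$ shows up as a boundary pole of $H_\infty$, with mass $\lim_{r\to1}\tfrac{1-r}{2}H_\infty(r\zeta)$. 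Since $H_\infty$ is here completely explicit, all of its boundary behaviour can be computed by hand, and no abstract theory of multiplicative convolution is needed.

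First I would compute the boundary values. Writing $z=e^{i\alpha}$ and using $1-e^{i\alpha}=-2i\sin(\alpha/2)e^{i\alpha/2}$, one gets $z/(1-z)^2=-1/(4\sin^2(\alpha/2))$, whence
\[
H_\infty(e^{i\alpha})=\sqrt{1-\frac{\kappa^2}{\sin^2(\alpha/2)}}.
\]
On the set $\{\sin^2(\alpha/2)>\kappa^2\}$ the radicand is positive, so $H_\infty$ has a real positive boundary value and contributes an absolutely continuous density; on the complementary set the radicand is negative, the boundary value is purely imaginary, and $\mathrm{Re}\,H_\infty=0$, so no mass is contributed there. After the half-angle substitution $\psi=\alpha/2$, and up to the normalizing constant coming from $d\alpha=2\,d\psi$ and the Poisson kernel, this is exactly the density $\sqrt{1-\kappa^2/\sin^2\psi}$ supported on $\{|\sin\psi|\geq|\kappa|\}$ of the statement, recovering the symmetric arcs described in the introduction.

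Next I would isolate the atom. As $z\to1$ the constant term is negligible against $4\kappa^2 z/(1-z)^2$, so $H_\infty(z)\sim 2|\kappa|\sqrt{z}/(1-z)\sim 2|\kappa|/(1-z)$, the branch being fixed by $H_\infty(0)=1$ and $\mathrm{Re}\,H_\infty\ge0$. Comparing with the Herglotz transform $(1+z)/(1-z)\sim 2/(1-z)$ of $\delta_1$ gives an atom of mass $|\kappa|$ at $z=1$; equivalently $\lim_{r\to1}\tfrac{1-r}{2}H_\infty(r)=|\kappa|$. Since $z=1$ is the only zero of $(1-z)^2$ on the closed unit disc, it is the only boundary singularity of $H_\infty$, so $\delta_1$ is the sole atom and there is no singular continuous part. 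Assembling the two pieces yields the claimed decomposition.

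The delicate points, rather than the algebra, are the analytic bookkeeping: justifying the Herglotz inversion (almost everywhere existence of the radial limits and absence of a singular continuous component, which is clean here because $H_\infty$ is a square root of a rational function with a single boundary singularity), and above all keeping the branch of the square root consistent throughout the disc so that $\mathrm{Re}\,H_\infty\ge0$ holds and the signs in both the boundary density and the residue at $z=1$ come out right. A consistency check I would run at the end is that the two pieces have total mass one (equivalently $H_\infty(0)=1$), and that the case $\kappa=0$ collapses to the Haar distribution, as it must.
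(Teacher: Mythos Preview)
Your approach is essentially the paper's: extract the atom at $z=1$ from the pole of $H_\infty$ and read the absolutely continuous density as the boundary real part of $H_\infty$, including the same half-angle computation $z/(1-z)^2=-1/(4\sin^2(\alpha/2))$. The only place the paper is marginally more explicit is in ruling out a singular continuous piece: rather than arguing informally from the rational form of $H_\infty^2$, it subtracts $|\kappa|\,(1+z)/(1-z)$ from $H_\infty$, checks the remainder extends continuously to $\overline{\mathbb{D}}$, and invokes the Poisson representation --- a step you correctly flag as the delicate point.
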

\begin{proof}
From the previous lemma, $H_{\infty}$ admits a pole at $z=1$, therefore $\mu_{\infty}$  assigns a weight at $z=1$ given by 
\begin{equation*}
\frac{1}{2}\lim_{z \rightarrow 1}\sqrt{(1-z)^2+ 4\kappa^2 z} = |\kappa|.
\end{equation*}
As to the remaining parts of $\mu_{\infty}$, we first discard the values $\kappa = 0, 1$. Indeed, we know that $\mu_{\infty}$ reduces to the Haar distribution on $\mathbb{T}$ when $\kappa = 0 \Leftrightarrow \theta = 1/2$, while when $\kappa = 1 \Leftrightarrow \theta =1$ one has 
\begin{equation*}
H_{\infty}(z) = \frac{1+z}{1-z}
\end{equation*}
so that $\mu_\infty = \delta_1$. Hence, assume $|\kappa| \in (0,1)$. Then $\mu_{\infty}- |\kappa|\delta_1$ is absolutely continuous with respect to the Haar distribution in $\mathbb{T}$ and its density is given by: 
\begin{equation*}
\Re\left(\sqrt{1+ 4\kappa^2 \frac{z}{(1-z)^2}}\right), \,z \in \mathbb{T}. 
\end{equation*}
Indeed 
\begin{equation*}
H_{\infty}(z)  - |\kappa|\frac{1+z}{1-z} = \frac{(1-\kappa^2)(1-z)}{ \sqrt{z^2 + 2(2\kappa^2-1)z +1}+|\kappa|(1+z)}
\end{equation*}
has a continuous extension on the boundary $\mathbb{T}$, since 
\begin{equation*}
z \mapsto z^2 + 2(2\kappa^2-1)z +1
\end{equation*}
does not take negative values (its root lie on $\mathbb{T}$) and since the denominator does not vanish on the closed unit disc. The proposition now follows from the Poisson representation of analytic functions in the open unit disc extending continuously to $\mathbb{T}$. Finally
\begin{equation*}
\Re\left[H_{\infty}(e^{i\psi})  - |\kappa|\frac{1+e^{i\psi}}{1-e^{i\psi}}\right] = \Re\left[\sqrt{1+ 4\kappa^2 \frac{e^{i\psi}}{(1-e^{i\psi})^2}}\right] = \sqrt{1-\frac{\kappa^2}{\sin^2\psi}}{\bf 1}_{\{|\sin\psi| > |\kappa|\}}.
\end{equation*}
\end{proof}
We close this section with the following closed form of the moments $r_n, n \geq 1$ showing that these are somehow averages over $(0,|\kappa|)$ of special polynomials : 
\begin{pro}
For any $\kappa \in (-1,1]$ 
\begin{equation*}
r_n = \kappa \int_0^{\kappa} P_{n-1}^{1,0}(1-2s^2) ds. 
\end{equation*}
where $P_n^{1,0}$ is the $n$-th Jacobi polynomial of parameters $(1,0)$ (\cite{Rainville}, p.254). 
\end{pro}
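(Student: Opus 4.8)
The plan is to read off the coefficients $r_n$ directly from the previous lemma, which gives
\[
2\sum_{n\geq 1} r_n z^n = H_\infty(z) - 1 = \sqrt{1 + 4\kappa^2\frac{z}{(1-z)^2}} - 1
\]
in a neighborhood of the origin, and to recognize the right-hand side as the generating function of the quantities $\kappa\int_0^\kappa P_{n-1}^{1,0}(1-2s^2)\,ds$ by means of the classical generating series for Jacobi polynomials. Recall from \cite{Rainville} that, for $|z|$ small, $\sum_{m\geq 0} P_m^{1,0}(x)\, z^m = 2/[R(1-z+R)]$ with $R=\sqrt{1-2xz+z^2}$. Specializing $x = 1-2s^2$ turns this into $R = R(s) = \sqrt{(1-z)^2 + 4s^2 z}$, which is precisely the quantity appearing under the square root in $H_\infty$. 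This suggests forming the candidate generating function and integrating in $s$.

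Concretely, I would first interchange summation and integration (legitimate for $z$ near $0$ by uniform convergence of the Jacobi series over the compact range of $s$) to obtain
\[
\kappa\sum_{n\geq1}z^n\int_0^\kappa P_{n-1}^{1,0}(1-2s^2)\,ds = \kappa z\int_0^\kappa \frac{2}{R(s)\bigl(1-z+R(s)\bigr)}\,ds .
\]
The heart of the matter is then the evaluation of this integral. Writing $a = 1-z$ and exploiting the identity $R(s)^2 - a^2 = 4s^2 z$, the integrand rationalizes as $\tfrac{1}{2s^2 z}\bigl(1 - a/R(s)\bigr)$, and one may use the elementary antiderivative $\int s^{-2}(a^2+cs^2)^{-1/2}\,ds = -(a^2+cs^2)^{1/2}/(a^2 s)$. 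The boundary contribution at $s\to 0$ vanishes, since $1 - a/R(s) = O(s^2)$ keeps the integrand bounded there, while the endpoint $s=\kappa$ produces $(R(\kappa)-a)/(a\kappa)$. The explicit factor $\kappa$ then cancels, leaving $(R(\kappa)-a)/(2a)$, which is exactly $\tfrac12\bigl(\sqrt{1+4\kappa^2 z/(1-z)^2}-1\bigr) = \sum_{n\geq1} r_n z^n$. Comparing power-series coefficients via the identity theorem yields the stated formula. Note that this cancellation of $\kappa$ makes the argument insensitive to the sign of $\kappa$, so no separate treatment of $\kappa<0$ is required.

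The main obstacle I anticipate is purely computational rather than conceptual: selecting the Jacobi generating function in the form best adapted to the substitution $x = 1-2s^2$, and then carrying out the $s$-integration cleanly, while checking that the $s\to 0$ boundary term genuinely drops out and that the interchange of summation and integration holds on a common disc of convergence. Once the integral is in closed form, the matching with $H_\infty$ and hence the identification of $r_n$ is immediate.
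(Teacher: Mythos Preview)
Your argument is correct and takes a genuinely different route from the paper's proof. The paper expands $\sqrt{1+4\kappa^2 z/(1-z)^2}$ by the binomial theorem, extracts the coefficient of $z^n$ as a finite sum, and then massages that sum through Pochhammer and Legendre-duplication identities until it is recognized as $n\kappa$ times a ${}_2F_1$-type sum whose $\kappa$-derivative is $P_{n-1}^{1,0}(1-2\kappa^2)$; integrating back in $\kappa$ gives the formula. In other words, the paper works coefficientwise and only at the very end identifies the Jacobi polynomial.

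You instead stay at the level of generating functions throughout: you invoke the classical series $\sum_{m\ge 0}P_m^{1,0}(x)z^m=2/[R(1-z+R)]$, specialize $x=1-2s^2$ so that $R=\sqrt{(1-z)^2+4s^2z}$ matches the radicand in $H_\infty$, and then compute the $s$-integral in closed form via the rationalization $\tfrac{2}{R(a+R)}=\tfrac{1}{2s^2z}(1-a/R)$ and the antiderivative $(R-a)/(as)$. The boundary analysis at $s\to 0$ and the cancellation of the explicit $\kappa$ are both sound, so the sign of $\kappa$ is indeed irrelevant. This is shorter and avoids all the special-function bookkeeping; what the paper's approach buys in exchange is an explicit finite hypergeometric-type expression for $r_n$ obtained along the way, which your method does not produce.
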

\begin{proof}
Using the generalized binomial Theorem (\cite{Rainville}, p.47), we write
\begin{align*}
 \sqrt{1+ \frac{4\kappa^2z}{(1-z)^2}} -1 & = \sum_{k \geq 1} \frac{(-1/2)_k}{k!}\left[-\frac{4\kappa^2z}{(1-z)^2}\right]^k = \sum_{k \geq 1} \frac{(-1/2)_k}{k!}(-4\kappa^2z)^k \sum_{n \geq 0} \frac{(2k)_n}{n!} z^n 
\end{align*}
where for $x \in \mathbb{R}$, $(x)_k = x(x+1)\dots (x+k-1)$ is the Pochhammer symbol (\cite{Rainville}, p.45). Inverting the order of summation and identifying coefficients of $z^n$, it follows that 
\begin{equation*}
r_n = \frac{1}{2}\sum_{k=1}^n \frac{(-1/2)_k}{k!} \frac{(2k)_{n-k}}{(n-k)!} (-4\kappa^2)^k, \, n \geq 1.
\end{equation*}
Writing $(2k)_{n-k} = \Gamma(n+k)/\Gamma(2k), k \geq 1$, using Legendre duplication formula (\cite{Erd}) 
\begin{equation*}
\Gamma(2k) = 2^{2k-1}(k-1)!(1/2)_k, 
\end{equation*}
and since 
\begin{equation*}
 (-1/2)_k = -\frac{1}{2k-1}(1/2)_k, 
\end{equation*}
one gets 
\begin{align*}
r_n &= (n-1)!\sum_{k=1}^n \frac{(-1/2)_k}{(1/2)_k} \frac{(n)_k}{(n-k)!(k-1)!} \frac{(-\kappa^2)^k}{k!} 
\\& = -(n-1)!\sum_{k=1}^n \frac{1}{2k-1} \frac{(n)_k}{(n-k)!(k-1)!} \frac{(-\kappa^2)^k}{k!}
\\& =  -(n-1)!\sum_{k=0}^{n-1} \frac{1}{2k+1} \frac{(n)_{k+1}}{(n-1-k)!(k+1)!} \frac{(-\kappa^2)^{k+1}}{k!}
\\& = n \sum_{k=0}^{n-1} \frac{(1-n)_k}{2k+1} \frac{(n+1)_{k}}{(k+1)!} \frac{(\kappa^2)^{k+1}}{k!}
\\& = n\kappa \sum_{k=0}^{n-1} \frac{(1-n)_k}{2k+1} \frac{(n+1)_{k}}{(2)_k} \frac{(\kappa)^{2k+1}}{k!}.
\end{align*}
Finally
\begin{align*}
\frac{d}{d\kappa} \sum_{k=0}^{n-1} \frac{(1-n)_k}{2k+1} \frac{(n+1)_{k}}{(2)_k} \frac{(\kappa)^{2k+1}}{k!} &= \sum_{k=0}^{n-1} (1-n)_k\frac{(n+1)_{k}}{(2)_k} \frac{(\kappa)^{2k}}{k!}  
\\& = \frac{(n-1)!}{(2)_{n-1}} P_{n-1}^{1,0}(1-2\kappa^2) = \frac{1}{n}P_{n-1}^{1,0}(1-2\kappa^2)
\end{align*}
where the second equality follows from \cite{Rainville}, p.255.
\end{proof}

Now, we proceed to the study of $\mu_t$. 
\section{The time-dependent regime} 
   
\subsection{Time-dependent recursive relation}
This paragraph is devoted to the proof via free stochastic calculus of the following result:  
\begin{pro}\label{prop11}
Let 
\begin{equation*}
s_n(t) := e^{nt}\tau((SY_tSY_t^{\star})^n) = e^{nt}r_n(t), \, \, n \geq 1, 
\end{equation*}
then
\begin{eqnarray*}
s_1(t) &=& \kappa^2e^t + (1-\kappa^2). \\ 
 \partial_ts_n(t) &=& - n \sum_{j=1}^{n-1}s_j(t)s_{n-j}(t) + \kappa^2 n^2 e^{nt}, \,  n \geq 2. 
\end{eqnarray*}
\end{pro}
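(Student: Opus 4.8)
The plan is to run free Itô calculus directly on the unitary $b_t := SY_tSY_t^\star$, extract an ordinary differential equation for each moment $r_n(t)=\tau(b_t^n)$, and perform the substitution $s_n=e^{nt}r_n$ only at the very end. I would first recall that the free unitary Brownian motion solves $dY_t = iY_t\,dX_t - \tfrac12 Y_t\,dt$, where $(X_t)$ is a free additive Brownian motion free from $S$, together with the fundamental free Itô rule $dX_t\,A\,dX_t = \tau(A)\,dt$ for adapted $A$, all other products of differentials vanishing. Applying the product rule to $b_t=SY_tSY_t^\star$, the two first-order terms reassemble into a martingale, while the single second-order term $S\,dY_t\,S\,dY_t^\star$ contracts through $dX_t\,S\,dX_t=\tau(S)\,dt=\kappa\,dt$ and, using $Y_tY_t^\star=\mathbf{1}$, contributes a drift $\kappa S\,dt$. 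Thus I expect $db_t=(\text{martingale})-b_t\,dt+\kappa S\,dt$. Taking the trace kills the martingale drift, so for $n=1$ one gets $\partial_t r_1=-r_1+\kappa^2$; since $b_0=S^2=\mathbf{1}$ forces $r_1(0)=1$, this integrates to $r_1(t)=\kappa^2+(1-\kappa^2)e^{-t}$, i.e. $s_1(t)=\kappa^2e^t+(1-\kappa^2)$.

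For $n\ge 2$ I would apply the Itô expansion of $b_t^n$, retaining both the first-order sum $\sum_i b_t^i(db_t)b_t^{n-1-i}$ and the second-order double sum $\sum_{i<j} b_t^i(db_t)b_t^{j-i-1}(db_t)b_t^{n-1-j}$, then take the trace and read off the drift. By traciality the first-order part gives $-nr_n+\kappa n\,\tau(Sb_t^{n-1})$, while each pair $(i,j)$ in the second-order part splits, after the contraction rule, into four terms whose inner and outer traces factor through the elementary identities $SY_t\cdot SY_t^\star=b_t$, $SY_t\cdot Y_t^\star=S$, $SY_tS\cdot SY_t^\star=S$ and $SY_tS\cdot Y_t^\star=b_t$. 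Collecting them, the contribution of a pair with $j-i=p$ is $-2r_pr_{n-p}+2\,\tau(Sb_t^{p-1})\tau(Sb_t^{n-1-p})$, and there are exactly $n-p$ such pairs.

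The crux, and the step I expect to be the main obstacle, is that the computation does not a priori close on the $r_n$ alone: it produces the mixed moments $\tau(Sb_t^m)$. Here I would prove the clean auxiliary fact that $\tau(Sb_t^m)=\kappa$ for every $m\ge0$. Writing $b_t=XZ_t$ with $X=S$ and $Z_t=Y_tSY_t^\star$ two symmetries ($X^2=Z_t^2=\mathbf{1}$, $\tau(X)=\tau(Z_t)=\kappa$), one has $\tau(Sb_t^m)=\tau(X(XZ_t)^m)=\tau(Z_t(XZ_t)^{m-1})$ for $m\ge1$, and repeatedly collapsing the adjacent equal letters produced by cyclicity reduces any such alternating word to $\tau(X)$ or $\tau(Z_t)$, both equal to $\kappa$; the case $m=0$ is immediate. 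Substituting $\tau(Sb_t^m)=\kappa$ everywhere, the first-order part becomes $-nr_n+n\kappa^2$ and the per-pair contribution becomes $-2r_pr_{n-p}+2\kappa^2$.

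Finally I would tidy the combinatorics. The symmetry $p\leftrightarrow n-p$ turns $-2\sum_{p=1}^{n-1}(n-p)r_pr_{n-p}$ into $-n\sum_{p=1}^{n-1}r_pr_{n-p}$, while $\sum_{p=1}^{n-1}(n-p)=n(n-1)/2$ makes the two $\kappa^2$ contributions (namely $n\kappa^2$ from the drift and $2\kappa^2\cdot n(n-1)/2$ from the quadratic variation) add up to $\kappa^2n^2$. This yields $\partial_t r_n=-nr_n-n\sum_{p=1}^{n-1}r_pr_{n-p}+\kappa^2n^2$, and inserting $r_n=e^{-nt}s_n$ cancels the linear terms and produces exactly $\partial_t s_n=-n\sum_{j=1}^{n-1}s_js_{n-j}+\kappa^2n^2e^{nt}$, as claimed. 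The same output can be cross-checked against the general moment formula of \cite{BenL}, of which this relation is an instance.
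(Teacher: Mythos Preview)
Your argument is correct. The free It\^o computation on $b_t=SY_tSY_t^\star$ is accurate, the four contractions you list are exactly the products $\alpha\beta=b_t$, $\alpha\delta=S$, $\gamma\beta=S$, $\gamma\delta=b_t$ (with $\alpha=SY_t$, $\beta=SY_t^\star$, $\gamma=SY_tS$, $\delta=Y_t^\star$) arising from the quadratic variation of $dM_t$, and the symmetrization/summation at the end is right. The key auxiliary fact $\tau(Sb_t^m)=\kappa$ is also correct: writing $b_t=XZ_t$ with $X=S$, $Z_t=Y_tSY_t^\star$ both symmetries, one has $\tau(X(XZ_t)^m)=\tau((Z_tX)^{m-1}Z_t)$, and cycling the trailing letter to the front collapses two letters at a time, reducing $m$ by $2$ at each step down to $\tau(X)$ or $\tau(Z_t)$.

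The route, however, differs from the paper's. The paper does not run It\^o calculus on $b_t$ directly; it quotes the general formula of Benaych-Georges--L\'evy (Theorem~3.4 in \cite{BenL}) for $\partial_t\bigl(e^{nt}\tau(a_1Y_ta_2Y_t^\star\cdots a_{2n}Y_t^\star)\bigr)$ and then specializes $a_1=\cdots=a_{2n}=S$. That formula has two double sums, one over pairs $(k,l)$ with $l-k$ even (giving the convolution term $-n\sum s_js_{n-j}$) and one with $l-k$ odd, where the neighbouring letters merge as $a_ka_{l+1}=S^2={\bf 1}$. The proof in the paper is then a careful case-by-case bookkeeping of the odd-gap pairs (separating $k=1$, $l=2n$, $l=k+1$, etc.), summing the partial contributions \eqref{C0}--\eqref{C4} to reach $\kappa^2n^2e^{nt}$. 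Your approach bypasses this combinatorics entirely: the single identity $\tau(Sb_t^m)=\kappa$ absorbs all the odd-gap contributions at once, which is why your second-order count collapses to $n\kappa^2+2\kappa^2\binom{n}{2}=\kappa^2n^2$ in two lines. What the paper's approach buys is that it is a direct specialization of an existing theorem, so no It\^o expansion needs to be redone; what your approach buys is a transparent mechanism (the closed moment $\tau(Sb_t^m)$) explaining \emph{why} the inhomogeneity is exactly $\kappa^2n^2$, with no case analysis.
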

\begin{proof}: it goes along the same lines of that of Proposition 1 in  \cite{DHH}, with minor modifications due to the cancellations $S^2 = {\bf 1}$ rather than $P^2 = P$. For the reader's convenience, we write the whole proof and recall first Theorem 3.4 in \cite{BenL}:
\begin{teo}\label{Theo}
Let $n \geq 1$ and define
\begin{equation*}
f_{2n}(a_1, \dots, a_{2n}, t) := e^{nt} \tau(a_1Y_ta_2Y_t^{\star}\dots a_{2n-1}Y_ta_{2n}Y_t^{\star})
\end{equation*}
where $\{a_1, \dots, a_{2n}\} \in \mathscr{A}$ is $\star$-free with $Y$. Set $f_0(A,t) := \tau(A)$ for any $A \in \mathscr{A}$ then 
\begin{align*}
&\partial_t  f_{2n}(a_1, \dots, a_{2n}, t) = - \sum_{\substack{1 \leq k < l \leq 2n \\ l - k \equiv 0[2]}}f_{2n - (l-k)}(a_1, \dots, a_k, a_{l+1}, \dots, a_{2n}, t)f_{l-k}(a_{k+1}, \dots, a_l,t) +
\\&  e^t\sum_{\substack{1 \leq k < l \leq 2n \\ l-k -1\equiv 0[2]}}f_{2n - (l-k)-1}(a_1, \dots, a_{k-1}, a_k a_{l+1}, a_{l+2}, \dots, a_{2n}, t)f_{l-k-1}(a_la_{k+1}, a_{k+2}, \dots, a_{l-1},t)
\end{align*}
\end{teo}
Now, we specialize Theorem \ref{Theo} to $a_k = S$ for all $1 \leq k \leq n$ so that $f_{2n} = s_n$ and consider $n \geq 2$ (for $n=1$, the result is derived for instance from \cite{BenL}, p.923). Since both indices $k,l$ in the first (respectively second) sum in Theorem \ref{Theo} have the same (respectively different) parity, therefore $k$ and $l+1$ in the second sum have the same parity and so do $l$ and $k+1$. Accordingly, the first sum does not contain terms $f_0(\cdot, t)$ while the second does: they correspond to indices $l=2n, k=1$ and to $l=k+1, 1 \leq k \leq 2n-1$. Since $S^2 = {\bf 1}$ and since $\tau$ is a trace, then the contribution of indices $k=1, l=2n$ is 
\begin{equation*}
\kappa^2 e^{nt}.
\end{equation*} 
For $l=k+1, 1 \leq k \leq 2n-1$, we distinguish two cases: the contribution of $1 \leq k \leq 2n-2$ is
\begin{equation*}
(2n-2)\kappa^2e^{nt}
\end{equation*}
while that of $k=2n-1, l=2n$ is $\kappa^2e^{(n-1)t}$. Thus, the whole contribution of the indices $k=1,l=2n$ and of $1 \leq k \leq 2n-1, l=k+1$ is 
 \begin{equation}\label{C0}
2n\kappa^2e^{nt}. 
 \end{equation}
Next we write $l = k+2s+1$ for integer positive values of $s$ and distinguish $n = 2$ and $n \geq 3$. If $n = 2$ then there is no additional term in the second sum, while if $n \geq 3$ we separate $k=1$ and $2 \leq  k  \leq 2n-3$. By the same properties of $a, \tau$ mentioned above, the contribution of indices $k=1, l=2s+2, 1 \leq s \leq n-2$ is 
 \begin{equation}\label{C1}
(n-2)\kappa^2e^{nt}. 
 \end{equation}
 For the remaining values of $2 \leq k \leq 2n-3$, we distinguish even and odd ones: the contribution of indices $k=2j, 1 \leq j \leq n-2, l = 2j+2s+1$ is 
 \begin{equation}\label{C2}
 \sum_{j=1}^{n-2}\sum_{s=1}^{n-j-1}\kappa^2e^{nt} = \kappa^2e^{nt} \frac{(n-1)(n-2)}{2},
 \end{equation}
while for $k=2j+1, 1 \leq j \leq n-2, l=2s+2j+2$ we distinguish $1 \leq s \leq n-j-2$ and $s = n-j-1$. When $1 \leq s \leq n-j-2$ we get  
 \begin{equation}\label{C3}
 \sum_{j=1}^{n-2}\sum_{s=1}^{n-j-2}\kappa^2e^{nt} = \kappa^2e^{nt} \frac{(n-2)(n-3)}{2} 
 \end{equation}
while for $s =  n-j-1$ we get 
 \begin{equation}\label{C4}
 \sum_{j=1}^{n-2}\kappa^2 e^{nt}  = (n-2)\kappa^2e^{nt}. 
 \end{equation}
Coming to the first sum, its contribution is the same as in \cite{DHH}, Lemma 1:
\begin{equation}\label{C5}
-n\sum_{k=1}^{n-1}s_{n-k}(t) s_k(t).
\end{equation}
The proposition is proved after summing \eqref{C0}, \eqref{C1}, \eqref{C2}, \eqref{C3} and \eqref{C4}. 
\end{proof}

\subsection{Dynamics of the Herglotz transform}
Here, we transform the time-dependent recursive relation into a pde governing Herglotz transform $z\mapsto H(t,z)$ of $\mu_t$. Recall that
$$
H(t,z)=\int_{\mathbb{T}}\frac{\xi+z}{\xi-z}d\mu_t(\xi)
$$
and that the moments
\begin{equation*}
r_n(t) = \tau[(SY_tSY_t^{\star})^n], \, n \geq 1
\end{equation*}
are the coefficients of the expansion of $H(t, \cdot)$ as an analytic function: 
\begin{equation*}
H(t,z) = 1+2\sum_{n \geq 1}r_n(t)z^n, \quad |z| < 1. 
\end{equation*} 
Using Proposition \ref{prop11}, we readily get:
\begin{pro}
The Herglotz transform $H$ satisfies the equation
\begin{equation}\label{transport}
\partial_t H+\frac{z}{2}\partial_z H^2=2\kappa^2\frac{z(1+z)}{(1-z)^3},\quad H(0,z)=\frac{1+z}{1-z},\quad\,|z|<1.
\end{equation}

\end{pro}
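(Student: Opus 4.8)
The plan is to translate the recursion of Proposition \ref{prop11} from the rescaled moments $s_n(t)=e^{nt}r_n(t)$ back to the moments $r_n(t)$ themselves, and then to package the resulting recursion into the power series $H(t,z)=1+2\sum_{n\ge1}r_n(t)z^n$. First I would divide through by $e^{nt}$: since $\partial_t s_n=e^{nt}(\partial_t r_n+nr_n)$ and each product satisfies $s_j(t)s_{n-j}(t)=e^{nt}r_j(t)r_{n-j}(t)$, Proposition \ref{prop11} becomes, after cancelling $e^{nt}$, the single relation
\begin{equation*}
\partial_t r_n(t)=-n\sum_{j=1}^{n-1}r_j(t)r_{n-j}(t)-nr_n(t)+\kappa^2n^2,\qquad n\ge1,
\end{equation*}
valid for all $n\ge1$ with the convention that the empty sum at $n=1$ vanishes; one checks directly that this reproduces the stated expression $r_1(t)=\kappa^2+(1-\kappa^2)e^{-t}$ coming from $s_1$.

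Next I would multiply this identity by $2z^n$ and sum over $n\ge1$, all manipulations being legitimate term by term within the open unit disc since $H(t,\cdot)$ is analytic there. Writing $G(t,z):=\sum_{n\ge1}r_n(t)z^n$, so that $H=1+2G$ and $H^2=1+4G+4G^2$, the quadratic convolution $\sum_{n\ge1}\big(\sum_{j=1}^{n-1}r_jr_{n-j}\big)z^n$ is exactly the coefficient series of $G^2$, while $\sum_{n\ge1}nr_nz^n=zG'$. A short computation then shows that
\begin{equation*}
\frac{z}{2}\partial_z H^2=2\sum_{n\ge1}nr_n(t)z^n+2\sum_{n\ge1}n\Big(\sum_{j=1}^{n-1}r_j(t)r_{n-j}(t)\Big)z^n,
\end{equation*}
which is precisely the negative of the linear and convolution contributions to $\partial_t H=2\sum_{n\ge1}\partial_t r_n(t)z^n$. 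Hence these terms cancel when one forms $\partial_t H+\tfrac{z}{2}\partial_z H^2$, leaving only the source term $2\kappa^2\sum_{n\ge1}n^2z^n$; the transport equation \eqref{transport} then follows from the classical generating-function identity $\sum_{n\ge1}n^2z^n=z(1+z)/(1-z)^3$.

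It remains to record the initial data, which is immediate: at $t=0$ one has $Y_0={\bf 1}$ and $S^2={\bf 1}$, so $r_n(0)=\tau(S^{2n})=\tau({\bf 1})=1$ for every $n\ge1$, whence $H(0,z)=1+2\sum_{n\ge1}z^n=(1+z)/(1-z)$. The only genuine point of the argument is the observation that the quadratic term in the recursion for $r_n$ is generated by $G^2$ and thus by $\tfrac{z}{2}\partial_z H^2$, while the linear term $-nr_n$ is absorbed by the derivative of the linear part of $H^2$; once this pairing is seen, everything else is bookkeeping. Accordingly, the mild care needed will be to handle the $n=1$ term so that the single recursion above holds uniformly in $n\ge1$, and to justify the termwise differentiation of the power series, both of which are routine by analyticity for $|z|<1$.
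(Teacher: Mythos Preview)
Your proof is correct and follows essentially the same route as the paper: you both pass from the $s_n$ recursion to the $r_n$ recursion and then sum, identifying the convolution with $z\partial_z(G^2)$ and using $\sum_{n\ge1}n^2z^n=z(1+z)/(1-z)^3$. Your packaging via $G$ and $H^2=1+4G+4G^2$ is arguably a bit cleaner than the paper's direct double-sum manipulation, and you additionally spell out the initial condition from $Y_0={\bf 1}$, $S^2={\bf 1}$, which the paper leaves implicit.
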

\begin{proof}
Elementary computations show the sequence  $(r_n(t))_{n\geq1}$ satisfies
\begin{eqnarray*}
\partial_tr_1(t) &=& {-r_1(t)} + \kappa^2,\\ 
 \partial_tr_n(t) &=& - nr_n(t) - n \sum_{j=1}^{n-1}r_j(t)r_{n-j}(t) + \kappa^2 n^2, \,  n \geq 2. 
\end{eqnarray*}
Consequently, 
\begin{eqnarray*}
\partial_t H&=&2\sum_{n\geq1}\partial_t r_n(t) z^n\\
&=&2\kappa^2\sum_{n\geq1}n^2 z^n -2 \sum_{n\geq1} nr_n(t) z^n-2\sum_{n\geq2}n \sum_{j=1}^{n-1}r_j(t)r_{n-j}(t) z^n\\
&=&2\kappa^2\frac{z(1+z)}{(1-z)^3} -z \partial_z H-2\sum_{j\geq1}r_j z^j \sum_{n\geq j+1}nr_{n-j}(t) z^{n-j}\\
&=&2\kappa^2\frac{z(1+z)}{(1-z)^3} -z\partial_z H-4\frac{H-1}{2}\sum_{j\geq1}jr_{j}(t) z^{j}\\
&=&2\kappa^2\frac{z(1+z)}{(1-z)^3} -z H\partial_z H.
\end{eqnarray*}
\end{proof}

\begin{rem}
The equation \eqref{transport} is a non homogeneous Burgers equation. It allows to retrieve the expression of $H_{\infty}$ already obtained in the previous section. Indeed, any stationary solution of 
\eqref{transport} is  a solution of $\partial_tH = 0,$ that is $H(t,z) = H(z)$ solves the first-order ordinary differential equation
\begin{equation*}\label{station1}
\partial_z(H^2) = 4\kappa^2 \frac{1+z}{(1-z)^3}. 
\end{equation*}
After integrating and taking into account $H(0) = 1$, we get
\begin{equation*}\label{station2}
H^2(z) = 4\kappa^2 \frac{z}{(1-z)^2} + H^2(0) = H_{\infty}^2(z).
\end{equation*}
\end{rem}

\section{Resolution of the Burgers equation}
In the sequel, we prove that the dynamics of the Herglotz transform is completely determined by the initial condition $H(0,\cdot)$, the long-time behavior $H_\infty$ and some characteristics curves. To make easier the computations, we first use the M\"obius transform 
 \begin{equation*}
 z\mapsto y=\frac{1+z}{1-z}
 \end{equation*}
 which realizes a one-to-one map between the open unit disc and the right half-plane $\{\Re z>0\}$.  Indeed, this transform replaces the fraction in the RHS of \eqref{transport} by a cubic polynomial. To see this, set
 $$
 F(t,y) \triangleq H({t}, z),\quad y=\frac{1+z}{1-z}.
 $$
 Then $F$ satisfies the equation
 \begin{equation}\label{transp23}
 \partial_t F+\frac14(y^2-1)\partial_y F^2=\frac{\kappa^2}{2} y(y^2-1), \quad F(0,y)=y.
 \end{equation}
 Observe that the stationary solution $H_\infty$ reads after this variable change
 $$
 H_\infty(z) \triangleq F_\infty(y) = \sqrt{(1-\kappa^2)+\kappa^2 y^2}.
 $$
 The major step toward the proof of our main result is the following theorem:

\begin{teo}\label{teo65}
Let $F$ be a solution of  the nonlinear equation \eqref{transp23}. Then 
\begin{equation}\label{trans67}
F^2(t,\phi(t,y))=\kappa^2\phi^2(t,y)+(1-\kappa^2)y^2, \quad \Re y>0,
\end{equation}
 with $$ \phi^2(t,y)= 1-\frac{4a\lambda e^{\sqrt{a}t}}{(b+  \lambda e^{\sqrt{a} t})^2-4\kappa^2 a}
 $$
where 
 $$
 a\triangleq \kappa^2+(1-\kappa^2)y^2,\, b = a+\kappa^2, \quad  \lambda \triangleq \frac{(1-\kappa^2)^2y^2}{(1+\sqrt{a})^2}(1-y^2).
 $$
 In the z-configuration this reads
 \begin{equation}\label{DynBis}
 H^2(t,\psi(t,z))=\kappa^2\left(\frac{1+\psi(t,z)}{1-\psi(t,z)}\right)^2+(1-\kappa^2)\left(\frac{1+z}{1-z}\right)^2, 
 \end{equation}
 with
 $$
\psi(t,z)=\frac{\phi(t,y)-1}{\phi(t,y)+1}\quad\hbox{and}\quad z=\frac{y-1}{y+1}\cdot
 $$
 \end{teo}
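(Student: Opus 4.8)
The plan is to solve the quasilinear equation \eqref{transp23} by the method of characteristics. Writing $\partial_y F^2 = 2F\,\partial_y F$, equation \eqref{transp23} becomes
\begin{equation*}
\partial_t F + \tfrac12(y^2-1)F\,\partial_y F = \tfrac{\kappa^2}{2}y(y^2-1),
\end{equation*}
so the characteristic curve $t\mapsto\phi(t,y)$ issued from $\phi(0,y)=y$, together with the value of $F$ along it, is governed by the system
\begin{equation*}
\partial_t\phi = \tfrac12(\phi^2-1)\,F(t,\phi), \qquad \frac{d}{dt}F(t,\phi) = \tfrac{\kappa^2}{2}\phi(\phi^2-1).
\end{equation*}
The solution of \eqref{transp23} I am after is the one reconstructed from these characteristics, so it suffices to integrate this ODE system with the initial data $\phi(0,y)=y$, $F(0,y)=y$.

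The key observation, which produces \eqref{trans67} immediately, is that $F^2-\kappa^2\phi^2$ is conserved along characteristics:
\begin{equation*}
\frac{d}{dt}\big(F^2-\kappa^2\phi^2\big) = 2F\cdot\tfrac{\kappa^2}{2}\phi(\phi^2-1) - 2\kappa^2\phi\cdot\tfrac12(\phi^2-1)F = 0.
\end{equation*}
Evaluating at $t=0$, where $\phi=y$ and $F=y$, fixes the conserved value at $(1-\kappa^2)y^2$, which is exactly \eqref{trans67}. Substituting back gives $F(t,\phi)=\sqrt{\kappa^2\phi^2+(1-\kappa^2)y^2}$, turning the $\phi$-equation into a closed, separable equation.

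To integrate it explicitly I would diagonalise the system via $u=F+\kappa\phi$ and $v=F-\kappa\phi$, which decouple into $\dot u=\tfrac{\kappa}{2}(\phi^2-1)u$ and $\dot v=-\tfrac{\kappa}{2}(\phi^2-1)v$; hence $uv=F^2-\kappa^2\phi^2=(1-\kappa^2)y^2=:c$ is the same constant, while $\phi=(u-v)/(2\kappa)$. Setting $p=u^2$ and eliminating $v=c/u$ reduces the dynamics to the single separable equation
\begin{equation*}
\dot p = \tfrac{1}{4\kappa}\big[p^2-(2c+4\kappa^2)p+c^2\big] = \tfrac{1}{4\kappa}(p-p_+)(p-p_-),
\end{equation*}
whose roots factor conveniently as $p_\pm=(\sqrt a\pm\kappa)^2=b\pm2\kappa\sqrt a$, with $a=\kappa^2+(1-\kappa^2)y^2$ and $b=a+\kappa^2$. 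Partial fractions then yield $(p-p_+)/(p-p_-)=C e^{\sqrt a\,t}$, and one recovers $\phi$ through $\phi^2=(p-c)^2/(4\kappa^2 p)$.

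The main obstacle is the final bookkeeping: inverting the relation for $p$, fixing the integration constant from $p(0)=(1+\kappa)^2y^2$, and simplifying to the stated closed form. Here the clean factorizations $p_\pm=(\sqrt a\pm\kappa)^2$, together with the identity $[(1+\kappa)y^2-\kappa]^2-a=-(1+\kappa)^2y^2(1-y^2)$, collapse the algebra and isolate precisely $\lambda=(1-\kappa^2)^2y^2(1-y^2)/(1+\sqrt a)^2$ and the announced expression for $\phi^2(t,y)$. Finally, transporting everything through the Möbius change $z=(y-1)/(y+1)$ and $\psi(t,z)=(\phi-1)/(\phi+1)$, under which $H(t,\psi)=F(t,\phi)$, $\tfrac{1+\psi}{1-\psi}=\phi$ and $\tfrac{1+z}{1-z}=y$, rewrites \eqref{trans67} as \eqref{DynBis}, completing the proof.
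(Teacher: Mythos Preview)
Your derivation of the conservation law \eqref{trans67} is identical to the paper's: both set up the characteristic system $\dot\phi=\tfrac12(\phi^2-1)F$, $\dot F=\tfrac{\kappa^2}{2}\phi(\phi^2-1)$ and observe that $F^2-\kappa^2\phi^2$ is constant along trajectories. Where you diverge is in the explicit integration. The paper substitutes the conservation law back to obtain the closed first-order ODE $\dot\phi=\tfrac12(\phi^2-1)\sqrt{(1-\kappa^2)y^2+\kappa^2\phi^2}$, then integrates by setting $u=1-\phi^2$ and applying an Euler-type substitution $\sqrt a(1-vu)=\sqrt{a-bu+cu^2}$ to rationalise the quadratic surd; this produces the formula for $U=1-\phi^2$ directly, and $\lambda$ is fixed by solving a quadratic at $t=0$ and selecting the root regular at $y=1$. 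Your route---diagonalising via $u=F+\kappa\phi$, $v=F-\kappa\phi$ so that $uv=c$ is conserved and $p=u^2$ satisfies the constant-coefficient Riccati $\dot p=\tfrac{1}{4\kappa}(p-p_+)(p-p_-)$ with $p_\pm=(\sqrt a\pm\kappa)^2$---is a genuinely different and arguably cleaner integration scheme: the factorisation of the roots is transparent, and the integration constant comes from the single value $p(0)=(1+\kappa)^2y^2$ rather than a quadratic. The cost is the final algebra (passing from $p$ back to $\phi^2=(p-c)^2/(4\kappa^2 p)$ and matching the stated closed form), which you leave as ``bookkeeping''; the paper's substitution, by contrast, lands directly on $1-\phi^2$ but requires more ingenuity in the choice of change of variable. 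Both approaches are correct; yours is more structural, the paper's more hands-on. One small caveat: your diagonalisation degenerates at $\kappa=0$, but that case is covered separately (and trivially) in the paper.
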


Before going into the details of the proof, some remarks are in order.
\begin{rems} \noindent
\begin{itemize}
\item For $\Re y>0,$ the trajectory $t\mapsto \phi(t,y)$ may cease to exist at some blow-up time, say $T_\star(y).$ This corresponds in the $z$ configuration to the time when the curve crosses  the unit circle at  the singularity $z=1$.  
\item Since the Herglotz transform $H(t,\cdot)$ is analytic inside the unit circle, the function $F(t, \cdot)$ is analytic in $\{\Re y>0\}$ and   the identity \eqref{trans67}  is meaningful  wherever the trajectory $t\mapsto \phi(t,y)$ does not leave this region. As the square root is well-defined there then knowing $\phi^2$ is sufficient to know $\phi.$  We point out that in general the curves $\phi$ cross the imaginary axis but there are some cases where the curves remain  confined therein as for instance when $y\in]0,1]$.
\item Observe that $\phi(0,y)=y$ so that  \eqref{trans67} can be rewritten under the form
$$
\widetilde{F}(t,\phi(t,y))=\widetilde{F}(0,y), \quad \widetilde{F}(t,y) \triangleq F(t,y)-\kappa^2 y^2.
$$
This means that the function $\widetilde{F}$ is constant along the trajectories. 
\item Keeping in mind the expressions of $H_{\infty}$ and $H(0,\cdot)$, \eqref{DynBis} is easily seen to be equivalent to \eqref{Dyn}. 
\item The square root of $a$ is well defined on $\{\Re y>0\}$.
\end{itemize}
\end{rems}
\begin{proof}
Let $\phi$ be the solution of the ordinary differential equation (hereafter ODE)
\begin{equation}\label{cond1}
\partial_t\phi=\frac12(\phi^2-1)F(t,\phi),\quad \phi(0,y)=y.
\end{equation}
Then differentiating the function $F_1: (t,y)\mapsto F(t,\phi(t,y))$ with respect to $t$ yields
$$
\partial_t F_1(t,y)=\frac12\kappa^2\phi(t,y)\left(\phi^2(t,y)-1\right).
$$
Therefore solving the pde \eqref{transp23} reduces to the study of the two coupled ODEs:
\begin{equation}
    \label{coupled}
\left\{ \begin{array}{ll}
\partial_t\phi=\frac12(\phi^2-1)F_1,\\
\partial_t F_1=\frac12\kappa^2\phi\left(\phi^2-1\right),\\
\phi(0,y)=y, F_1(0,y)=y.
\end{array} \right.
\end{equation}

It is clear that \eqref{coupled} entails
$$
F_1\partial_t F_1-\kappa^2\phi\partial_t\phi=0.
$$
Hence, integrating with respect to $t$ yields \eqref{trans67}
\begin{equation*}
F_1^2(t,y)-\kappa^2\phi^2(t,y)=\left(1-\kappa^2\right)y^2.
\end{equation*}
which in turn leads to 
\begin{equation}
    \label{first1}
\left\{ \begin{array}{ll}
\partial_t\phi =\frac12(\phi^2-1)\sqrt{(1-\kappa^2)y^2+\kappa^2\phi^2},\\
\phi(0,y)=y.
\end{array} \right.
\end{equation}
Now, we shall solve \eqref{first1} for fixed $y > 0$ which is equivalent to $z \in (-1,1)$. First, observe that $\phi(t,y) = \pm1$ are stationary solutions of \eqref{first1} for any $\kappa$ and by uniqueness of the solution, we deduce that if $0 < y <1$ then $\phi$ is global in time and\footnote{If $y=1$ then $\phi \equiv 1$.} 
$$
|\phi(t,y)| <  1, \forall t\in \mathbb{R}_+.
$$
More precisely, assume for instance that there exists $T> 0$ and $0 < y < 1$ such that $\phi(T,y) = 1$. Then $\phi$ and $t \mapsto 1$ solve the Cauchy problem corresponding to the data $\phi(T,y) =1$. Necessarily, $\phi=1$ which is in contrast with $0 < y < 1$. As a matter of facy, $t \mapsto \phi(t,y)$ is non increasing and crosses the right half-plane with limit $-1$ as $t \rightarrow \infty$. Similar arguments show that 
$$
y > 1\Rightarrow \phi(t,y) >  1,\forall t\in]0, T^\star[.
$$
Indeed, the lifespan $T^{\star}$ is finite due to the cubic power of the non linearity in \eqref{first1}. Next, we need to compute the indefinite integral
\begin{equation*}
2\int \frac{dx}{(1-x^2)\sqrt{(1-\kappa^2)y^2 + \kappa^2x^2}} 
\end{equation*}
for real positive $x$ (which is equivalent to $\phi \in (-1,1)$). First, we perform the variable change $u = 1-x^2 \in (-\infty, 1)$ to transform the integral to 
\begin{equation*}
-\int \frac{du}{u\sqrt{a-bu + cu^2}}
\end{equation*}
where we set 
\begin{eqnarray*} 
a &=& (1-\kappa^2)y^2+\kappa^2 \\
b &=& (2\kappa^2 + (1-\kappa^2)y^2) \\ 
c &=&  \kappa^2.
\end{eqnarray*}
Note that $b^2 - 4ac = (1-\kappa^2)^2y^4$ and that the roots of $a-bu+cu^2 = 0$ lie in $[1,\infty]$. Next, we perform the variable change 
\begin{equation*}
\sqrt{a}(1-vu) = \sqrt{a-bu + cu^2}
\end{equation*}
and we easily get 
\begin{equation*}
u = \frac{2av-b}{av^2-c}, \quad du = -2a \frac{av^2 - bv+ c}{(av^2-c)^2} dv. 
\end{equation*}
As a result 
\begin{equation*}
\int \frac{du}{u\sqrt{a-bu + cu^2}} = 2\sqrt{a}\int \frac{dv}{2av-b} = \frac{1}{\sqrt{a}}\ln\left|\frac{2a - bu - 2\sqrt{a}\sqrt{a-bu + cu^2}}{u}\right|.
\end{equation*}
But $u < 1$ so that $2a-bu > 2a-b = (1-\kappa^2)y^2 > 0$ and 
\begin{equation*}
(2a-bu)^2 - 4a(a-bu+cu^2) = (b^2-4ac)u^2 > 0. 
\end{equation*}
Consequently 
\begin{equation*}
\int \frac{du}{u\sqrt{a-bu + cu^2}}  = \frac{1}{\sqrt{a}}\ln\frac{2a - bu - 2\sqrt{a}\sqrt{a-bu + cu^2}}{|u|}.
\end{equation*}
and if $U(t,y) \triangleq 1-\phi^2(t,y),$ then 
\begin{equation*}
\frac{1}{\sqrt{a}}\ln\frac{2a - bU(t,y) - 2\sqrt{a}\sqrt{a-bU(t,y) + cU^2(t,y)}}{|U(t,y)|} =  t + A
\end{equation*}
for some $A = A(y,\kappa)$. Equivalently, 
\begin{equation*}
\frac{2a - bU(t,y) - 2\sqrt{a}\sqrt{a-bU(t,y) + cU^2(t,y)}}{|U(t,y)|} =  \lambda e^{\sqrt{a}t}
\end{equation*} 
where $\lambda = e^{\sqrt{a}A}$. Writing this equality as 
\begin{equation*}
2a - [b +\epsilon \lambda e^{\sqrt{a}t}] U(t,y) = 2\sqrt{a}\sqrt{a-bU(t,y) + cU^2(t,y)}
\end{equation*} 
and raising it to the square we get 
\begin{equation}\label{ident23}
\Big(\big(b +\epsilon  \lambda e^{\sqrt{a}t}\big)^2 -4ac\Big)U(t,y) = 4a\big(b +\epsilon  \lambda e^{\sqrt{a}t}\big)- 4ab = 4\epsilon  a\lambda e^{\sqrt{a}t},
 \end{equation}
 $\epsilon \in\{-1,1\}$ being the sign of $U.$ From the observation made before the sign of $U$ does not change through the time. To find the value of $\lambda$ we check the preceding equation for $t=0$
 $$
  \lambda^2+2\epsilon(b-\frac{2a}{1-y^2})\lambda+b^2-4ac=0.
 $$
 
 {$\bullet$ \it Case $y^2\le1$:} This corresponds to $\epsilon=1$ and the above equation becomes  
 \begin{equation}\label{eqs212}
 \lambda^2+2(b-\frac{2a}{1-y^2})\lambda+b^2-4ac=0.
\end{equation}
 The discriminant of this polynomial 
 \begin{eqnarray*}
 \Delta&=&\frac{16a}{(1-y^2)^2}\left(a-b(1-y^2)+c(1-y^2)^2\right)\\
 &=&\frac{16a y^2}{(1-y^2)^2}\left(b-2c+cy^2\right)\\
 &=&\frac{16a y^4}{(1-y^2)^2}.
 \end{eqnarray*}
 
 Therefore the only solution of \eqref{eqs212} which is not singular at $y=1$ is
 \begin{eqnarray*}
 \lambda&=&-b+\frac{2a}{1-y^2}-\frac{2 y^2\sqrt{a}}{1-y^2}\\
 &=&-b+2+\frac{2(a-1)}{1-y^2}-\frac{2(\sqrt{a}-1)}{1-y^2}\\
 &=& -b+2\kappa^2+2(1-\kappa^2)\frac{y^2}{1+\sqrt{a}}\\
 &=&(1-\kappa^2)y^2\left(-1+\frac{2}{1+\sqrt{a}}\right)\\
 &=&\frac{(1-\kappa^2)^2y^2}{(1+\sqrt{\kappa^2+(1-\kappa^2)y^2})^2}(1-y^2).
  \end{eqnarray*}
 
  {$\bullet$ \it Case $y^2\geq1 $:}  reproducing the same computation yields to
  $$
  \lambda=\frac{(1-\kappa^2)^2y^2}{(1+\sqrt{\kappa^2+(1-\kappa^2)y^2})^2}(y^2-1).
  $$
  
  Hence in both cases we get
  $$
  \epsilon\lambda=\frac{(1-\kappa^2)^2y^2}{(1+\sqrt{\kappa^2+(1-\kappa^2)y^2})^2}(1-y^2).
  $$
  
  Finally, \eqref{ident23} yields
  \begin{eqnarray*}
  U(t,y)&=&\frac{4a\lambda e^{\sqrt{a}t}}{\big(b+ \lambda e^{\sqrt{a}t}\big)^2-4ac} = 1 - \phi^2(t,y)
  \end{eqnarray*}
 where we simply write $\lambda$ instead of  $\epsilon \lambda$ and hope there is no ambiguity. 
  \end{proof}
  
Now we discuss some  properties related to the monotonicity of the flow map.
\begin{pro}\label{pro987}
Let $1<y_1<y_2$ and denote by $T^\star_i$ the lifespan of the trajectory $t\mapsto \phi(t,y_i)$. Then $T^\star_2< T^\star_1$ and
$$
1<\phi(t,y_1)< \phi(t,y_2),\quad \forall t\in[0,T^\star_2[.
$$
\end{pro}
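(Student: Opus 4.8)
The plan is to exploit the separation of variables already implicit in the proof of Theorem \ref{teo65}. For $y>1$ I recorded there that the trajectory $t\mapsto\phi(t,y)$ stays in $(1,\infty)$, is strictly increasing (its time derivative $\tfrac12(\phi^2-1)\sqrt{(1-\kappa^2)y^2+\kappa^2\phi^2}$ is positive on $\phi>1$), and has a finite lifespan $T^\star(y)$. Integrating \eqref{first1} therefore yields the implicit relation
\[
I\big(y,\phi(t,y)\big)=t,\qquad I(y,L)\triangleq\int_y^L\frac{2\,dx}{(x^2-1)\sqrt{(1-\kappa^2)y^2+\kappa^2x^2}},
\]
and, letting $L\to\infty$, the closed expression $T^\star(y)=I(y,\infty)$ for the lifespan; the integral converges because the integrand decays like $2/(|\kappa|x^3)$ at infinity, which is exactly the analytic source of the finiteness of $T^\star$ noted earlier.

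The key step will be a single monotonicity statement: for each fixed $L$, the map $y\mapsto I(y,L)$ is strictly decreasing on $(1,L)$. The reason is that raising $y$ has two effects pushing in the same direction, namely it shrinks the interval $[y,L]$ of integration and it enlarges the quantity $(1-\kappa^2)y^2$ under the square root (recall $\kappa^2<1$ since $\theta\in(0,1)$), hence enlarges the denominator and diminishes the positive integrand; either effect alone already forces strictness. Granting this, the nesting of lifespans is immediate: since $y_1<y_2$, one gets $T^\star_2=I(y_2,\infty)<I(y_1,\infty)=T^\star_1$.

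For the ordering of the trajectories I would fix $t\in[0,T^\star_2[$ and set $L_i\triangleq\phi(t,y_i)$, so that $I(y_1,L_1)=I(y_2,L_2)=t$, and argue by contradiction: if $L_1\ge L_2$ then, enlarging the upper limit (positive integrand) and then using monotonicity in the first argument,
\[
t=I(y_1,L_1)\ge I(y_1,L_2)>I(y_2,L_2)=t,
\]
which is absurd; hence $\phi(t,y_1)=L_1<L_2=\phi(t,y_2)$, while $\phi(t,y_1)>1$ is already known. The obstacle I anticipate is conceptual rather than computational: the field in \eqref{first1} depends on $y$ not only through the initial datum but also explicitly through $(1-\kappa^2)y^2$, so one cannot invoke the naive ``distinct trajectories of an autonomous ODE never cross'' principle. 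The integral representation circumvents this precisely because the explicit $y$-dependence and the shift of the starting point act coherently, so that both the ordering and the nesting of lifespans fall out of the one monotonicity property of $I(\cdot,L)$.
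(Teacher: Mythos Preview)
Your argument is correct and takes a genuinely different route from the paper. The paper proves the ordering $\phi(t,y_1)<\phi(t,y_2)$ by a direct comparison principle: since $y_1<y_2$ and $\kappa^2<1$, the trajectory $\phi_2$ is a strict supersolution of the ODE satisfied by $\phi_1$ (with parameter $y_1$), and a first-touching argument at a hypothetical crossing time $T$ produces the contradiction $\phi_2'(T)>\phi_1'(T)$ against $\phi_2'(T)\le\phi_1'(T)$; the lifespan inequality is then read off from the blow-up criterion. You instead separate variables and encode everything in the single function $I(y,L)$, reducing both conclusions to the observation that $I(\cdot,L)$ is strictly decreasing because the two $y$-dependencies (domain of integration and size of the denominator) push the same way. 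Your approach has the virtue of yielding the explicit formula $T^\star(y)=I(y,\infty)$ and of making transparent why the non-autonomous nature of \eqref{first1} is harmless here; the paper's approach is the standard ODE comparison and avoids any integration. One small remark: your asymptotic $2/(|\kappa|x^3)$ tacitly assumes $\kappa\neq 0$; when $\kappa=0$ the integrand is $2/\big(y(x^2-1)\big)\sim 2/(yx^2)$, which is still integrable, so the conclusion $T^\star<\infty$ is unaffected.
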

\begin{proof}
The monotonicity of the flow is a consequence of the comparison principle. Indeed, denote by $t\mapsto \phi_i(t)$ the trajectory associated to $y_i$.  Since $\phi_i>1$, then 
\begin{equation}
    \label{first123}
\left\{ \begin{array}{ll}
\partial_t\phi_2 = \displaystyle \frac{1}{2}(\phi_2^2-1)\sqrt{(1-\kappa^2)y_1^2+\kappa^2\phi_2^2},\\
\phi_2(0)>\phi_1(0).
\end{array} \right.
\end{equation}
Assume that the curves of $\phi_1$ and $\phi_2$ can intersect and let $T$ be the first time of intersection. This means that
$$
\forall t\in [0,T[,\,\phi_1(t)<\phi_2(t), \quad \hbox{and}\quad \phi_1(T)=\phi_2(T).
$$
Thus necessary $\phi^{\prime}_2(T)\le\phi^\prime_1(T),$ but from the differential inequality \eqref{first123} we observe that $\phi^\prime_2(T)>\phi^\prime_1(T)$, which is impossible. The inequality between the lifespans follows easily from the blow up criterion.
\end{proof}

\section{Proof of Theorem \ref{THM} and Corollary \ref{COR}} 
After this long wave of computations, we proceed to the proof of theorem \ref{THM} and of its corollary. It consists of two lemmas: the first one gives the limit of the RHS in \eqref{Limit} as $n \rightarrow \infty$. The second one shows that for any fixed time $t$, the flow $y \mapsto \phi(t,y)$ blows up at some real $y_t$. Equivalently, there exists a real $z_t$ such that $\psi(t,z_t) = 1$.  
\begin{lem}
The following assertions hold true:
\begin{itemize}
\item 
Let $\phi \in [0,2\pi]$, then 
\begin{equation*}
\frac{1}{2^{2n}}\sum_{k=1}^n \binom{2n}{n-k}\left(e^{ik\phi} + e^{-ik\phi}\right) =\cos^{2n}(\phi/2) -  \frac{1}{2^{2n}} \binom{2n}{n}.
\end{equation*}
\item Let $\mu$ be a probability distribution on the unit circle $\mathbb{T}$, then
$$
\lim_{n \rightarrow \infty} \frac{1}{2^{2n}}\sum_{k=1}^n \binom{2n}{n-k}\int_{\mathbb{T}}(z^k+\overline{z}^k)\mu(dz)=\mu(\{1\}).
$$
\item Recall the spectral distribution $\mu_t$ of the unitary operator $SY_t SY_t^\star$. Then
$$
\lim_{n\to+\infty}\tau[(PY_tPY_t^{\star})^n]=\frac{1}{2}[2\theta-1+\mu_t(\{1\})]. 
$$
\end{itemize}
\end{lem}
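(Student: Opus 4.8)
For the first assertion, the plan is to expand the even power of the cosine by the binomial theorem. Writing $\cos(\phi/2) = (e^{i\phi/2}+e^{-i\phi/2})/2$ one gets
$$\cos^{2n}(\phi/2) = \frac{1}{2^{2n}}\sum_{j=0}^{2n}\binom{2n}{j}e^{i(j-n)\phi},$$
and I would then reindex by $k = j-n \in \{-n,\dots,n\}$ and use $\binom{2n}{n+k}=\binom{2n}{n-k}$ to split off the central $k=0$ term $\binom{2n}{n}/2^{2n}$ and pair the remaining $\pm k$ contributions into $e^{ik\phi}+e^{-ik\phi}$. Rearranging yields the claimed identity at once; this step is purely computational.

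For the second assertion, since the sum over $k$ is finite I would interchange it with the integral and apply the first part with $z=e^{i\phi}$, so that the left-hand side becomes
$$\int_{\mathbb{T}}\cos^{2n}(\phi/2)\,\mu(d\phi) - \frac{1}{2^{2n}}\binom{2n}{n}.$$
The limit then rests on two elementary facts: the central binomial term $2^{-2n}\binom{2n}{n}$ tends to $0$ (Stirling), and $\cos^{2n}(\phi/2)$ converges pointwise on $\mathbb{T}$ to the indicator of $\{z=1\}$, since it equals $1$ at $\phi=0$ and, because $|\cos(\phi/2)|<1$ for $\phi\in(0,2\pi)$, tends to $0$ elsewhere, all the while staying bounded by $1$. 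Dominated convergence against the probability measure $\mu$ then gives $\int_{\mathbb{T}}\cos^{2n}(\phi/2)\,\mu(d\phi)\to\mu(\{1\})$, which is the assertion.

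For the third assertion I would start from the expansion \eqref{Limit}, writing the summand as $\tau((SY_tSY_t^\star)^k)=r_k(t)=\int_{\mathbb{T}}z^k\mu_t(dz)$. The term $2^{-2n-1}\binom{2n}{n}$ vanishes in the limit and the constant $(2\theta-1)/2$ is inert, so the whole problem reduces to computing the limit of $c_n := 2^{-2n}\sum_{k=1}^n\binom{2n}{n-k}r_k(t)$. The point requiring care, and the main obstacle, is that the second assertion is stated for the symmetrized moments $\int_{\mathbb{T}}(z^k+\overline z^k)\mu_t = 2\,\mathrm{Re}\,r_k(t)$, whereas \eqref{Limit} carries the bare moments $r_k(t)$. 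I would bridge this by observing that the left-hand side of \eqref{Limit} equals $\tau(J_t^n)$, where $J_t = (PY_tP)(PY_tP)^\star\ge 0$ and the interior idempotents collapse under the trace; being the $n$-th moment of a positive operator it is real, and since the remaining terms in \eqref{Limit} are real, $c_n$ is real for every $n$. Hence $c_n = \mathrm{Re}(c_n) = \tfrac12\,2^{-2n}\sum_{k=1}^n\binom{2n}{n-k}\int_{\mathbb{T}}(z^k+\overline z^k)\mu_t(dz)$, which by the second assertion tends to $\tfrac12\mu_t(\{1\})$. Substituting into \eqref{Limit} yields the announced limit $\tfrac12[(2\theta-1)+\mu_t(\{1\})]$. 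Conceptually, the realness of the $r_k(t)$ reflects the invariance of $\mu_t$ under $z\mapsto\overline z$, which also follows from the conjugation symmetry of the free unitary Brownian motion, giving $(S,Y_t)\cong(S,Y_t^\star)$ and hence the equidistribution of $SY_tSY_t^\star$ with its inverse.
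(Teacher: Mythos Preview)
Your proof is correct and follows essentially the same approach as the paper: the binomial expansion of $\cos^{2n}(\phi/2)$ for the first item, Stirling plus dominated convergence for the second, and the realness of the moments $r_k(t)$ to reduce the third to the second via \eqref{Limit}. The only cosmetic difference is in the third item: the paper obtains $r_k(t)\in\mathbb{R}$ directly from the trace identity $\tau((SY_tSY_t^\star)^k)=\tau((Y_t^\star SY_tS)^k)$, i.e.\ conjugation-invariance of $\mu_t$, whereas you first infer that the whole sum $c_n$ is real from the other terms in \eqref{Limit} and then take real parts---but you also note the conjugation-invariance argument, so the two proofs coincide.
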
 

\begin{proof} \noindent
\begin{itemize}
\item Using the fact that 
\begin{equation*}
\binom{2n}{n-k} = \binom{2n}{n+k}
\end{equation*}
we write
\begin{align*}
\sum_{k=1}^n \binom{2n}{n-k}\left(e^{ik\phi} + e^{-ik\phi}\right) &= \sum_{k=1}^n \binom{2n}{n-k}e^{ik\phi} + \sum_{k=-n}^{-1}\binom{2n}{n+k}e^{ik\phi}
\\& = \sum_{k=-n}^n \binom{2n}{n+k}e^{ik\phi} - \binom{2n}{n}
\\& = e^{-in\phi}\sum_{k=0}^{2n} \binom{2n}{k}e^{ik\phi} - \binom{2n}{n}\\
&= 2^{2n}\cos^{2n}(\phi/2) - \binom{2n}{n}.
\end{align*}
\item Identifying $\mu$ with its image under the map $z \mapsto \arg(z) \in (-\pi, \pi)$ we readily get
\begin{eqnarray*}
 \frac{1}{2^{2n}}\sum_{k=1}^n \binom{2n}{n-k}\int_{\mathbb{T}}(z^k+\overline{z}^k)\mu(dz)&=& \frac{1}{2^{2n}}\sum_{k=1}^n \binom{2n}{n-k}\int_{-\pi}^{\pi}(e^{ik\phi}+e^{-ik\phi})\mu(d\phi)\\
 &=&\int_{\mathbb{T}}\cos^{2n}(\phi/2)\mu(d\phi) - \frac{1}{2^{2n}}\binom{2n}{n}.
\end{eqnarray*}
The result follows from Stirling formula
\begin{equation*}
\lim_{n \rightarrow \infty}\frac{1}{2^{2n+1}}\binom{2n}{n} = \lim_{n \rightarrow \infty}\frac{1}{\sqrt{\pi n}} = 0.
\end{equation*}
\item Due to the trace property of $\tau$, the spectral distributions of $SY_tSY_t^{\star}$ and of $Y_t^{\star}SY_tS$ coincide so that $\mu_t$ is invariant under $z \mapsto \overline{z}$. Hence 
\begin{equation*}
 2\int_{\mathbb{T}}z^k\mu_t(dz) = \int_{\mathbb{T}}(z^k+\overline{z}^k)\mu_t(dz)
 \end{equation*}
  and the desired limit follows from \eqref{Limit}.
\end{itemize}
\end{proof}

Recall from the previous section that $\phi$ is global in time if and only if $ 0 < y \le 1$. However, these values of $y$ correspond to $-1 < z \leq 0$ while we need to reach $z=1$ along the real interval $(0,1]$.  When $y \geq 1$, we shall prove the following \begin{lem}

\begin{enumerate} \noindent
\item For any $t>0$, there exists $z_t\in ]0,1[$ such that $\psi(t, z_t)=1$, where $\psi$ is the flow defined in Theorem $\ref{teo65}.$
\item For any $t > 0$,
$$
\lim_{z\to 1, z < 1}(1-z)H(t,z)=2|\kappa|.
$$
\end{enumerate}
\end{lem}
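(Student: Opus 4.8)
The plan is to reduce the first (and harder) assertion to the blow-up time of the trajectory $t\mapsto\phi(t,y)$. Under the M\"obius correspondence $\psi=(\phi-1)/(\phi+1)$ the equality $\psi(t,z)=1$ forces $\phi(t,y)=+\infty$, so finding $z_t\in(0,1)$ with $\psi(t,z_t)=1$ is exactly finding $y_t>1$ whose trajectory blows up at time $t$, i.e. $T^\star(y_t)=t$, and then setting $z_t=(y_t-1)/(y_t+1)$. From the explicit formula for $\phi^2$ in Theorem \ref{teo65}, blow-up occurs when the denominator $g(t)^2-4\kappa^2 a$ first vanishes, where $g(t):=b+\lambda e^{\sqrt a t}$. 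For $y>1$ one has $\lambda<0$, so $g$ is strictly decreasing; since $U(t,y)=1-\phi^2(t,y)<0$ stays negative with negative numerator $4a\lambda e^{\sqrt a t}$, the denominator stays positive and vanishes for the first time at $g(T^\star)=2|\kappa|\sqrt a$ (the lower root $-2|\kappa|\sqrt a$ is unreachable by a decreasing $g$ starting above it).

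Solving $b+\lambda e^{\sqrt a T^\star}=2|\kappa|\sqrt a$ and using $b-2|\kappa|\sqrt a=(\sqrt a-|\kappa|)^2$ together with the factorizations $a-\kappa^2=(1-\kappa^2)y^2$ and $a-1=(1-\kappa^2)(y^2-1)$, everything collapses to the clean expression
\[
T^\star(y)=\frac{1}{\sqrt a}\,\log\frac{(\sqrt a-|\kappa|)(\sqrt a+1)}{(\sqrt a+|\kappa|)(\sqrt a-1)},\qquad a=\kappa^2+(1-\kappa^2)y^2.
\]
Because $|\kappa|<1\le\sqrt a$ this is strictly positive, and Proposition \ref{pro987} already supplies strict monotonicity in $y$. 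It then remains to read off the boundary behaviour: as $y\to1^+$ we have $\sqrt a\to1^+$, the factor $\sqrt a-1\to0^+$, and $T^\star\to+\infty$; as $y\to\infty$ the logarithm behaves like $2(1-|\kappa|)/\sqrt a$, whence $T^\star\to0$. Continuity and the intermediate value theorem produce, for each $t>0$, a unique $y_t>1$ with $T^\star(y_t)=t$, giving $z_t\in(0,1)$ and proving the first assertion.

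The second assertion then follows almost formally from \eqref{DynBis}. For fixed $t$ the map $y\mapsto\phi(t,y)$ is increasing on $(1,y_t)$ (Proposition \ref{pro987}) with $\phi\to1$ as $y\to1^+$ and $\phi\to\infty$ as $y\to y_t^-$; transporting through the two M\"obius maps, $x\mapsto\psi(t,x)$ is a continuous increasing bijection of $(0,z_t)$ onto $(0,1)$. Hence every real point $z$ close to $1$ is of the form $z=\psi(t,x)$ with $x\to z_t^-$ as $z\to1^-$. Evaluating \eqref{DynBis} at such a base point and multiplying by $(1-z)^2$ gives
\[
(1-z)^2 H^2(t,z)=\kappa^2(1+z)^2+(1-\kappa^2)(1-z)^2\left(\frac{1+x}{1-x}\right)^2 .
\]
As $z\to1^-$ the pre-image $x\to z_t\in(0,1)$, so $(1+x)/(1-x)$ stays finite and the second term vanishes while the first tends to $4\kappa^2$. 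Since $\mu_t$ is invariant under conjugation, $H(t,z)$ is real for real $z$ and is positive near $1$ (its pole there carries positive mass), so $(1-z)H(t,z)\to 2|\kappa|$.

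I expect the \textbf{main obstacle} to be the first assertion, and within it the two boundary limits of $T^\star$: monotonicity is free from Proposition \ref{pro987}, but showing that $T^\star$ sweeps out all of $(0,\infty)$ genuinely needs the explicit formula above (equivalently, sharp control of the cubic ODE \eqref{first1} near $y=1$ and as $y\to\infty$). The one delicate bookkeeping point is the sign analysis selecting the root $g(T^\star)=2|\kappa|\sqrt a$ rather than $-2|\kappa|\sqrt a$. Once $z_t$ is secured, the passage to the residue of $H(t,\cdot)$ at $1$ via \eqref{DynBis} is routine.
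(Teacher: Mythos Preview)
Your proof is correct and follows the same overall strategy as the paper: reduce $\psi(t,z_t)=1$ to the blow-up of $\phi(t,\cdot)$ at some $y_t>1$, locate this blow-up by setting the denominator $(b+\lambda e^{\sqrt a t})^2-4\kappa^2 a$ equal to zero, and then read off the residue of $H(t,\cdot)$ at $z=1$ from the quadratic relation \eqref{trans67}/\eqref{DynBis}.

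There are two tactical differences worth recording. First, to select between the two roots $g(T^\star)=\pm 2|\kappa|\sqrt a$ you use a direct sign/monotonicity argument ($g$ is strictly decreasing from a value above $2|\kappa|\sqrt a$, so it hits $+2|\kappa|\sqrt a$ first); the paper instead derives the pair of candidate equations \eqref{poss1}--\eqref{poss2} and discards the wrong one by letting $\kappa\to1$ and comparing with the explicitly solvable case. Your argument is more self-contained and avoids the limiting check. Second, you turn the blow-up condition into a closed formula for $T^\star(y)$ and then prove existence of $y_t$ by reading off the boundary limits $T^\star(1^+)=+\infty$, $T^\star(+\infty)=0$ and invoking the intermediate value theorem together with the monotonicity from Proposition~\ref{pro987}; the paper simply asserts that each equation admits a unique solution $a>1$ without spelling out this step. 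Your treatment is therefore a bit more explicit on the existence side, but the content is the same.

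For part~(2) the two arguments are identical up to the choice of variable: the paper computes $\lim_{y\to\infty}F(t,y)/y$ via \eqref{trans67}, while you multiply \eqref{DynBis} by $(1-z)^2$; both amount to dividing $F^2=\kappa^2\phi^2+(1-\kappa^2)y^2$ by $\phi^2$ and sending $\phi\to\infty$.
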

\begin{proof}
$(1)$ Let $t>0$, then $\psi(t,z_t) = 1$ is equivalent to 
$$
|\phi(t,y_t)|=+\infty, \quad y_t = \frac{1+z_t}{1-z_t}.
$$
With regard to the expression of $\phi$ in Theorem \ref{teo65}, we seek $y=y_t$ such that 
\begin{equation*}
(b-\lambda e^{\sqrt{a}t}\big)^2-4ac = 0
\end{equation*}
where we recall that $a = (1-\kappa^2)y^2+\kappa^2, b = \kappa^2 + a, b^2-4ac = (1-\kappa^2)^2y^4$ and
\begin{equation*}
\lambda = \frac{(1-\kappa^2)^2y^2}{(1+\sqrt{a})^2}(y^2-1).
\end{equation*}
Note by passing that when $\kappa = 0$ one has $c= 0, b = a = y^2, \lambda = (y-1)/(y+1)$ so that 
\begin{equation*}
(b-\lambda e^{\sqrt{a}t}\big)^2-4ac = 0 \Leftrightarrow \frac{y-1}{y+1}e^{yt} = 1\Leftrightarrow ze^{t(1+z)/(1-z)} = 1.
\end{equation*}
The last equality states that the $\Sigma$-transform of $Y_{2t}$ attains the value $1$ (see \cite{Biane}). For general values of $\kappa \in (-1,1)$, we are led to
\begin{equation*}
(1-\kappa^2)^2y^4 + \frac{(1-\kappa^2)^4y^4}{(1+\sqrt{a})^4}(y^2-1)^2e^{2\sqrt{a}t} - 2\frac{(1-\kappa^2)^2y^2}{(1+\sqrt{a})^2}(y^2-1)(\kappa^2+a)e^{\sqrt{a}t} = 0.
\end{equation*}
Equivalently 
\begin{equation*}
(1+\sqrt{a})^4 y^2 + (1-\kappa^2)^2y^2(y^2-1)^2e^{2\sqrt{a}t} - 2(1+\sqrt{a})^2(y^2-1)(\kappa^2+a)e^{\sqrt{a}t} = 0.
  \end{equation*}  
But since 
\begin{equation*}
y^2 = \frac{a-\kappa^2}{1-\kappa^2}, 
\end{equation*}
  then 
  \begin{equation*}
(1+\sqrt{a})^4\frac{a-\kappa^2}{1-\kappa^2} + (1-\kappa^2)^2\frac{a-\kappa^2}{1-\kappa^2} \left(\frac{a-1}{1-\kappa^2}\right)^2e^{2\sqrt{a}t} - 2(a+\kappa^2)(1+\sqrt{a})^2 \frac{a-1}{1-\kappa^2}e^{\sqrt{a}t} = 0
\end{equation*}
which simplifies to   
   \begin{equation*}
(1+\sqrt{a})^4(a-\kappa^2) + (a-\kappa^2)(a-1)^2e^{2\sqrt{a}t} - 2(a+\kappa^2)(1+\sqrt{a})^2 (a-1)e^{\sqrt{a}t} = 0.
\end{equation*}
Writing $(a-1) = (\sqrt{a}-1)(\sqrt{a}+1)$ we simplify further to get 
\begin{equation*}
(1+\sqrt{a})^2(a-\kappa^2) + (a-\kappa^2)(\sqrt{a}-1)^2e^{2\sqrt{a}t} - 2(a+\kappa^2) (a-1)e^{\sqrt{a}t} = 0.
\end{equation*}
Gathering terms proportional to $\kappa^2$, we are led to 
\begin{equation*}
\kappa^2[(1+\sqrt{a})^2 + (\sqrt{a}-1)^2e^{2\sqrt{a}t} + 2(a-1)e^{\sqrt{a}t}] = a[(1+\sqrt{a})^2 + (\sqrt{a}-1)^2e^{2\sqrt{a}t} - 2(a-1)e^{\sqrt{a}t}]
 \end{equation*}
and finally to 
\begin{equation*}
\kappa^2\left[\frac{\sqrt{a}+1}{\sqrt{a}-1} + e^{\sqrt{a}t}\right]^2 = a\left[\frac{\sqrt{a}+1}{\sqrt{a}-1} - e^{\sqrt{a}t}\right]^2.
 \end{equation*}
Hence either 
\begin{equation}\label{poss1}
e^{\sqrt{a}t}[|\kappa| +\sqrt{a}] =  \frac{\sqrt{a}+1}{\sqrt{a}-1}[\sqrt{a} - |\kappa|]
\end{equation}
or 
\begin{equation}\label{poss2}
e^{\sqrt{a}t}[\sqrt{a} - |\kappa|] =  \frac{\sqrt{a}+1}{\sqrt{a}-1}[\sqrt{a} + |\kappa|].
\end{equation}
For fixed $t>0$ each equation admits a unique solution  $a>1$ and to select the suitable solution we refer to the special case $\kappa=1$. Naturally, the dependence of the solution with respect to the parameter $\kappa$ should be continuous but as we will see  this property is violated by the second formula at the endpoint $\kappa=1$. Indeed, coming back to \eqref{first1} we see that the solution is given by 
$$
\frac{\phi^2(t,y)-1}{\phi^2(t,y)}=\frac{y^2-1}{y^2}e^t.
$$
Consequently,
$$
U(t,y)= 1- \phi^2(t,y) = \frac{(y^2-1)e^t}{(y^2-1) e^t-y^2},\quad y\geq1
$$
which becomes singular when
\begin{equation}\label{poss3}
e^t=\frac{y^2}{y^2-1}\cdot
\end{equation}
But the left-hand side of \eqref{poss2} converges to zero when $\kappa$ goes to $1$, whereas  the right-hand side diverges at this point and thus this formula is not valid for this limiting case. Coming to the formula \eqref{poss1}, l'Hopital rule leads to
\begin{eqnarray*}
\lim_{\kappa\to 1}\frac{\sqrt{a}+1}{\sqrt{a}+|\kappa|}\frac{\sqrt{a} - |\kappa|}{{\sqrt{a}-1 }}&=&\lim_{\kappa\to 1}\frac{\sqrt{a} - \kappa}{{\sqrt{a}-1 }} = \frac{y^2}{y^2-1}\cdot
\end{eqnarray*}
This gives exactly the equation \eqref{poss3} and thus we retain \eqref{poss1}. 

${\bf{2)}}$ Fix $t>0$, there exists $y_t>1$ such that 
$$\lim_{y\to y_t^-}\phi(t,y)=+\infty.
$$
We point out from Proposition \ref{pro987} that the lifespan of the trajectory starting at $y\in]1,y_t[$ is larger than the lifespan $t$ of  the curve $\tau\mapsto \phi(\tau,y_t)$. Keeping in mind $F(t,y) = H(t,z)$ where $y = (1+z)/(1-z)$, we get
\begin{eqnarray*}
\lim_{z\to 1^-}(1-z) H(t,z)&=&2\lim_{y\to +\infty}\frac{F(t,y)}{y} = 2\lim_{y\to y_t^-}\frac{F(t,\phi(t,y))}{\phi(t,y)}.
\end{eqnarray*}
But formula \eqref{trans67} entails
\begin{eqnarray*}
\lim_{y\to y_t^-}\frac{F^2(t,\phi(t,y))}{\phi^2(t,y)}=\kappa^2
\end{eqnarray*}
whence we deduce
$$
\lim_{z\to 1^-}(1-z) H(t,z)=2|\kappa|,
$$
the lemma is proved.
\end{proof}

\begin{proof}[Proof of theorem \ref{THM} and corollary \ref{COR}]
It is a general fact that the singular discrete part of $\mu_t$ corresponds exactly to the poles of $H(t,\cdot)$. Moreover, the weight that $\mu_t$ assigns to a given pole can be recovered using radial limits. in particular, $\mu_t$ assigns the weight $|\tau|$ to $z=1$ (\cite{CMR}). Theorem \ref{THM} then follows after taking the limit as $n \rightarrow \infty$ in \eqref{Limit} and using the last assertion in the first Lemma of this section. Corollary \ref{COR} follows readily after normalizing \eqref{Limit} by $1/\tau(P) = 1/\theta$ and taking the limit as $n \rightarrow \infty$: the LHS of \eqref{Limit} now tends to the weight that the spectral distribution of $J_t$ in the compressed space, say $\nu_t$, assigns to $x=1$ since 
\begin{equation*}
\frac{1}{\tau(P)}\tau[(PY_tPY_t^{\star})^n] = \int_0^1 x^n\nu_t(dx). 
\end{equation*}
 \end{proof}

\section{Analysis of the moments}
Set $\epsilon \triangleq \kappa^2 = [\tau(S)]^2$ and recall from Proposition \ref{prop11} that $s_n(t) = e^{nt}r_n(t), n \geq 1$ satisfy 
\begin{eqnarray*}
 \partial_ts_n(t) &=& - n \sum_{j=1}^{n-1}s_j(t)s_{n-j}(t) + \epsilon n^2 e^{nt} \\ 
 s_1(t) &=& \kappa^2e^t + (1-\kappa^2). 
\end{eqnarray*}
Recall also from \cite{DHH} that when $\epsilon = 0$ then 
\begin{equation*}
s_n(t) = \frac{1}{n}L_{n-1}^{(1)}(2nt) 
\end{equation*}
where $L_n^{(1)}$ is the $n$-th Laguerre polynomial (\cite{Rainville}). In this section, we perform an analysis of the `leading term' in $s_n(t)$ when $\epsilon \in [0,1)$, that is the term corresponding to the fastest decay $e^{-nt}$ of $r_n(t)$. Indeed, it is easy to see by induction that 
\begin{equation*}
s_n(t) = P_n(\epsilon, t) + \sum_{k=1}^{n}e^{kt}\dots,
\end{equation*}
where $P_n(\epsilon, t)$ is a polynomial in the variable $t$ whose degree is $n-1$. To proceed, we first observe that compared to the equation satisfied by $s_n(t)$ when $\epsilon = 0$, the deformation comes with the factor $e^{nt}$. Hence, the polynomials $P_n(\epsilon, t), n \geq 1$ still satisfy   
\begin{eqnarray*}
\partial_t P_n(\epsilon, t) &=& -n\sum_{k=1}^{n-1}P_j(\epsilon, t)P_{n-j}(\epsilon, t), \quad n \geq 2 \\
P_1(\epsilon, t) &=& (1-\epsilon).
\end{eqnarray*}
However, one needs to compute $P_n(\epsilon, 0)$ in order to determine the polynomials $P_n(\epsilon, t), n \geq 2$. We shall see in the sequel that while $P_n(0,0) = s_n(0) = 1$, $P_n(\epsilon, 0)$ changes dramatically when $\epsilon \in (0,1)$. To this end, we make use of the formula (\cite{NS}, Theorem 14.4)
\begin{equation*}
s_n(t) = \tau((SY_tSY_t^{\star})^n) = \sum_{\pi \in \textrm{NC(2n)}}c_{\pi}(S,S\dots,S)m_{K(\pi)}(Y_t,Y_t^{\star}, \dots, Y_t, Y_t^{\star}).
\end{equation*}
Here $\textrm{NC(2n)}$ is the lattice of non crossing partitions of size $2n$, $K(\pi) \in NC(2n)$ denotes the Kreweras complement of $\pi$, $c_{\pi}$ is the free cumulant of the $2n$-tuple $(a,a,\dots, a)$ associated with $\pi$ and $m_{K(\pi)}$ is the mixed moment of the $2n$-tuple $(Y_t,Y_t^{\star}, \dots, Y_t, Y_t^{\star})$ (\cite{NS}, Chapter IX). Now, the polynomial $P_n$ comes without any exponential factor, thus we only need to focus exactly on partitions $\pi \in \textrm{NC}(2n)$ such that their Kreweras complements $K(\pi)$ are non-parity alternating, that is each block of $K(\pi)$ lies either in $\{1, 3, \dots, 2n-1\}$ or in $\{2, 4, \dots, 2n\}$ (we identify $K(\pi) \approx \{1,2,3, \dots, 2n\}$). 
Indeed, the $k$-th moment of $Y_t$ is given by 
\begin{equation*}
e^{-kt/2}\frac{1}{k}L_{k-1}^{(1)}(kt), \,\, k \geq 1, 
\end{equation*}
so that the polynomial $P_n$ corresponds to $m_{K(\pi)}$ for which there is no cancellation between $Y$ and $Y^{\star}$.
According to \cite{NS} Exercise 9.42 p.153-154, the partition $\pi$ runs over the set $\textrm{NCE}(2n)$ of non crossing even partitions (each block of $\pi$ has even number of elements). Besides, since the constant term of 
\begin{equation*}
\frac{1}{k}L_{k-1}^{(1)}(kt)
\end{equation*}
equals $1$ for any $k \geq 1$ then we end up with
\begin{equation*}
P_n(\epsilon, 0) = \sum_{\pi \in \textrm{NCE}(2n)} c_{\pi}(S,S,\dots,S). 
\end{equation*}   
We can write this sum as 
\begin{equation*}
P_n(\epsilon, 0) = \sum_{\pi \in \textrm{NC}(2n)} \frac{1}{2^{|\pi|}}c_{\pi}(a_1-a_2,a_1-a_2,\dots,a_1-a_2)
\end{equation*}
where $a_1, a_2 \in \mathscr{A}$ are two free copies of $S$ and $|\pi|$ is the number of blocks of $\pi$. Indeed, by freeness of $a_1$ and $a_2$ and multi-linearity of free cumulants, one has 
\begin{equation*}
c_{V}(a_1-a_2, a_1-a_2, \dots, a_1-a_2) = c_V(a_1,\dots, a_1) + c_V(-a_2,\dots,-a_2)
\end{equation*}
 for any block $V \in \pi$, whence the equality follows. This new way of expressing $P_n(\epsilon, 0)$ hints to the even moments of the $1/2$-fold free convolution of the spectral distribution of $a_1-a_2$ (\cite{NS}).
Note that when $\theta= 1/2 \Leftrightarrow \epsilon = 0$ then $a_1,a_2, -a_2$ are distributed according to the symmetric Bernoulli distribution 
\begin{equation*}
\frac{1}{2}[\delta_1 + \delta_{-1}],
\end{equation*}
hence the $1/2$-fold free convolution of the spectral distribution of $a_1-a_2$ is still the symmetric Bernoulli distribution. Accordingly, we retrieve $P_n(0,0)$:
\begin{equation*}
P_n(0,0) = \int x^{2n} \frac{1}{2}[\delta_1+\delta_{-1}](dx) = 1.
\end{equation*}
However when $\theta \neq 1/2 \Leftrightarrow \epsilon \neq 0$ the situation becomes rather cumbersome: the spectral distribution of $a_1$ is given by 
\begin{equation*}
\theta \delta_1 + (1-\theta) \delta_{-1}
\end{equation*}
while that of $-a_2$ is given by
\begin{equation*}
(1-\theta)\theta \delta_1 + \theta \delta_{-1}.
\end{equation*}
Equivalently, the $R$-transform of $a_1$ reads 
\begin{equation*}
R_{a_1}(y) = \frac{\sqrt{1+4y(y+\kappa)} - 1}{2y}
\end{equation*}
while that of $-a_2$ reads 
\begin{equation*}
R_{-a_2}(y) = \frac{\sqrt{1+4y(y-\kappa)} - 1}{2y}
\end{equation*}
near $y=0$. It follows that the $R$-transform of the $1/2$-fold free convolution of $a_1-a_2$ is given by 
\begin{equation*}
R(y) := \frac{1}{2}[R_{a_1}(y) + R_{a_2}(y)] = \frac{\sqrt{1+4y(y+\kappa)} + \sqrt{1+4y(y-\kappa)}- 2}{4y}
\end{equation*}
and that its $K$-transform is given by 
\begin{equation*}
K(y) := R(y)+\frac{1}{y}  = \frac{\sqrt{1+4y(y+\kappa)} + \sqrt{1+4y(y-\kappa)}+2}{4y}.
\end{equation*}
Inverting $K$ (in composition sense) leads to the third-degree polynomial 
\begin{equation*}
y^3 - h_1(z)y^2 + h_2(z)y - h_3(z) = 0 
\end{equation*}
where 
\begin{eqnarray*}
h_1(z) &=& \frac{2z^2-1}{z(z^2-1)}\\
h_2(z) &=& \frac{5z^2 + \epsilon -1}{4z^2(z^2-1)} \\ 
h_3(z) &=& \frac{1}{4z(z^2-1)}.
\end{eqnarray*}

Performing the variables change 
\begin{equation*}
3y = [3h_2(z) - (h_1(z))^2]^{1/2}u + h_1(z)
\end{equation*}
we recover the reduced form 
\begin{equation*}
u^3 + 3u - \frac{2[h_1(z)]^3 - 9h_1(z)h_2(z) + 27h_3(z)}{[3h_2(z)- (h_1(z))^2]^{3/2}} = 0.
\end{equation*}
Solutions of this equation may be expressed through Gauss hypergeometric functions ${}_2F_1$ as shown in \cite{Hille} p.265-266.

\end{document}